\theoremstyle{plain}
\newtheorem{thm}{Theorem}
\newtheorem{lem}[thm]{Lemma}
\newtheorem{cor}[thm]{Corollary}
\newtheorem{remark}[thm]{Remark}
\theoremstyle{definition}
\newtheorem{defn}{Definition}
\theoremstyle{remark}
\newtheorem{claim}{Claim}
\newcommand{\real}{\ensuremath {\mathbb R} }	
\newcommand{\nat}{\ensuremath {\mathbb N} }
\newcommand{\mbf}[1] {\text{\boldmath$#1$}}
\newcommand{\remove}[1] {}
\newcommand{\pr} {{\bf Pr}}
\newcommand{\cB} {\ensuremath{\mathcal B}}
\newcommand{\cC} {\ensuremath{\mathcal C}}
\newcommand{\cD} {\ensuremath{\mathcal D}}
\newcommand{\cG} {\ensuremath{\mathcal G}}
\newcommand{\cK} {\ensuremath{\mathcal K}}
\newcommand{\cN} {\ensuremath{\mathcal N}}
\newcommand{\cR} {\ensuremath{\mathcal R}}
\newcommand{\cS} {\ensuremath{\mathcal S}}
\newcommand{\cU} {\ensuremath{\mathcal U}}
\newcommand{\sG} {\ensuremath{\mathscr G}}
\newcommand{\bX} {\ensuremath{\mbf X}}
\newcommand{\bZ} {\ensuremath{\mbf Z}}
\newcommand{\fP} {\ensuremath{\mathfrak P}}
\newcommand{\eps}{\varepsilon}
\newcommand{\bfrac}[2]{\of{\frac{#1}{#2}}}
\newcommand{\of}[1]{\left( #1 \right)}
\DeclareMathOperator{\polylog}{polylog}
\DeclareMathOperator{\vol}{vol}
\DeclareMathOperator{\1conn}{1-conn}
\DeclareMathOperator{\2conn}{2-conn}
\DeclareMathOperator{\kconn}{\mathit{k}-conn}
\DeclareMathOperator{\RPM}{RPM}
\DeclareMathOperator{\RHC}{RHC}
\title{Rainbow perfect matchings and Hamilton cycles in the random geometric graph}
\author{Deepak Bal\footnote{Department of Mathematical Sciences, Montclair State University, Montclair, NJ, 07043 U.S.A. \texttt{deepak.bal@montclair.edu}}\and  Patrick Bennett\thanks{Department of Mathematics, Western Michigan University, Kalamazoo, MI, 49008 U.S.A.
\texttt{patrick.bennett@wmich.edu}}
\and Xavier P\'erez-Gim\'enez\thanks{Department of Mathematics, Ryerson University, Toronto, ON, Canada, M5B 2K3.
\texttt{xperez@ryerson.ca}}
\and Pawe\l{} Pra\l{}at\thanks{Department of Mathematics, Ryerson University, Toronto, ON, Canada, M5B 2K3.
Research is supported in part by NSERC and Ryerson University.
\texttt{pralat@ryerson.ca}} }
\date{}
\begin{document}
\maketitle

\begin{abstract}
Given a graph on $n$ vertices and an assignment of colours to the edges, a rainbow Hamilton cycle is a cycle of length $n$ visiting each vertex once and with pairwise different colours on the edges.
Similarly (for even $n$) a rainbow perfect matching is a collection of $n/2$ independent edges with pairwise different colours.
In this note we show that if we randomly colour the edges of a random geometric graph
with sufficiently many colours, then a.a.s.\ 
the graph contains a rainbow perfect matching (rainbow Hamilton cycle) if and only if the minimum degree is at least $1$ (respectively, at least $2$).
More precisely, consider $n$ points (i.e.~vertices) chosen independently and uniformly at random from the unit $d$-dimensional cube for any fixed $d\ge2$.  Form a sequence of graphs on these $n$ vertices by adding edges one by one between each possible pair of vertices. Edges are added in increasing order of lengths (measured with respect to the $\ell_p$ norm, for any fixed $1<p\le\infty$). Each time a new edge is added, it receives a random colour chosen uniformly at random and with repetition from a set of $\lceil Kn\rceil$ colours, where $K=K(d)$ is a sufficiently large fixed constant. Then, a.a.s.\ the first graph in the sequence with minimum degree at least $1$ must contain a rainbow perfect matching (for even $n$), and the first graph with minimum degree at least $2$ must contain a rainbow Hamilton cycle.
\end{abstract}

\section{Introduction}\label{s:intro}

Let $\bX=(X_1,X_2,\ldots,X_n)$ be $n$ i.i.d.\ points in $[0,1]^d$ chosen with the uniform distribution, where $d\ge2$ is fixed. 
Fix $1< p\le\infty$. Unless otherwise stated, distances and lengths in $[0,1]^d$ are measured with respect to the $\ell_p$ norm. We construct the \emph{random geometric graph} $\sG(\bX;r)$ of radius $r$ as follows. The vertices of $\sG(\bX;r)$ are indexed by $[n]$, and each pair of different vertices $i,j\in[n]$ are joined by an edge if and only if $X_i$ and $X_j$ are within ($\ell_p$-normed) distance $r$. The {\em length} of an edge $ij$ is defined to be $\|X_i-X_j\|_p$, and is always at most $r$ by construction.
With probability $1$ all points in $\bX$ are different, and fall in general position. Therefore, we will often identify vertex $i$ and point $X_i$ (i.e.~we regard $X$ as the vertex set), and assume all edges have different lengths.
Random geometric graphs (or more precisely a slight variation of the model defined above) were first introduced by Gilbert~\cite{Gil61}, and have been widely investigated ever since. They provide a theoretical model for wireless ad-hoc networks, and have relevant applications in statistics.
We refer the reader to Penrose's monograph~\cite{Pen03} and a more recent survey by Walters~\cite{Wal11} for further details and references on the subject.

We consider the natural coupling in which all $\sG(\bX;r)$ with $r\in[0,\infty)$ share one common vertex set $\bX$. We call this coupling the \emph{random geometric graph process}, and denote it by $\big(\sG(\bX;r)\big)_{r\ge0}$. Intuitively, the process starts at time $r=0$ with an empty graph on vertex set $\bX$ (almost surely, assuming that all vertices are at different positions). Then, as we increase $r$ from $0$ to $\infty$, we add edges one by one in increasing order of length. By construction, each snapshot of the process at a given time $r$ is distributed precisely as a copy of $\sG(\bX;r)$. Finally, $\sG(\bX;r)$ is deterministically the complete graph for all $r\ge D$, where $D=\|(1,1,\overset{d}{\ldots},1)\|_p  = d^{1/p}$ is the distance between two opposite corners of $[0,1]^d$.

A lot of work has been done to describe the connectivity properties of random geometric graphs in this process and the emergence of spanning subgraphs (such as perfect matchings and Hamilton cycles).
A celebrated result of Penrose~\cite{Pen99} asserts that a.a.s.\footnote{We say that a sequence of events $H_n$ holds \emph{asymptotically almost surely} (a.a.s.) if $\lim_{n\to\infty}\pr(H_n)=1$.}\ the first edge added during the process that gives minimum degree at least $k$ also makes the graph $k$-connected.
More precisely, let
\begin{align*}
\widehat r_{\delta\ge k} &= \widehat r_{\delta\ge k}(\bX) = \min \left\{ r\ge0 : \text{$\sG(\bX;r)$ has minimum degree at least $k$} \right\}
\quad\text{and}
\\
\widehat r_{\kconn} &= \widehat r_{\kconn}(\bX) = \min \left\{ r\ge0 : \text{$\sG(\bX;r)$ is $k$-connected} \right\}.
\end{align*}
Then, for every constant $k\in\nat$, 
\begin{equation}
\lim_{n\to\infty} \pr \left( \widehat r_{\delta\ge k}=\widehat r_{\kconn} \right) = 1.
\label{eq:Penrose}
\end{equation}
In view of this, Penrose (cf.~\cite{Pen03}) asked whether a.a.s.\ that first edge in the process that gives minimum degree at least $2$ (and ensures $2$-connectivity) is also responsible for the emergence of a Hamilton cycle.
A first step in this direction was achieved by D\'\i az, Mitsche and P\'erez~\cite{DMP07}, who showed (for dimension $d=2$) that, given any constant $\eps>0$, a.a.s.\ $\sG(\bX;(1+\eps)\widehat r_{\delta\ge 2})$ contains a Hamilton cycle.
Some of their ideas were recently extended by three research teams (Balogh, Bollob\'as and Walters; Krivelevich and M\" uller; and P\'erez-Gim\'enez and Wormald), who independently settled Penrose's question in the affirmative (but only two papers~\cite{BBKMW11} and~\cite{MPW11} were finally published).
In particular, a more general packing result in~\cite{MPW11} implies that
\begin{equation}
\text{a.a.s.\ }
\begin{cases}
\text{$\sG(\bX; \widehat r_{\delta\ge2})$ contains a Hamilton cycle, and}
\\
\text{$\sG(\bX; \widehat r_{\delta\ge1})$ contains a perfect matching (for even $n$).}
\end{cases}
\label{eq:HCPM}
\end{equation}
Clearly, this claim is the best possible, since any graph with minimum degree less than $1$ (less than~$2$) cannot have a perfect matching (respectively, Hamilton cycle).

In this paper we consider an edge-coloured version of the random geometric graph.
(Throughout the manuscript, we will use the term {\em edge colouring} (and other terms alike) to denote an assignment of colours to the edges, not necessarily proper in a graph-theoretical sense.)
Let $\bZ=(Z_{ij})_{1\le i<j\le n}$ be a random vector of colours, chosen independently and with replacement from a set of colours of size $c$. We use this vector $\bZ$ to colour the edges of the random geometric graph: each edge $ij$ of $\sG(\bX;r)$ ($1\le i<j\le n$) is assigned colour $Z_{ij}$. We denote this model by $\sG(\bX;\bZ;r)$. Similarly, we consider the coupled process $\big(\sG(\bX;\bZ;r)\big)_{r\ge0}$ in which, as we increase $r$ from $0$ to $\infty$, we add new edges in increasing order of length, and each new edge $ij$ is coloured according to $Z_{ij}$.

Given a graph with colours assigned to the edges, we say the graph (or its edge set) is {\em rainbow} if all edges receive different colours.
Recently, there have been many papers written on the subject of rainbow spanning structures in randomly edge coloured random graphs and digraphs (see {\em e.g.}~\cite{BBCFP, BBFP, BF13, CF02, FK15, FKMS14, FNP13, FL14, JW07}). Let $G_c(n,p)$ denote the binomial random graph $G(n,p)$ where each edge has independently been assigned a uniformly random colour from a set of size $c$. For graphs $H$ with maximum degree $\Delta = \Delta(H)$ and $n$ vertices, Ferber, Nenadov and Peter~\cite{FNP13} showed that a.a.s.\ $G_c(n,p)$ contains a rainbow copy of $H$, provided that $p = n^{1/\Delta}\polylog(n)$ and $c = (1 + o(1))e(H)$. Here, the number of colours is asymptotically optimal, whereas the bound on $p$ most likely is not. For Hamilton cycles tighter results are known.  In \cite{CF02}, Cooper and Frieze determined that a.a.s.\ $G_c(n,p)$ contains a rainbow Hamilton cycle if $p\ge 42\log n / n$ and $c\ge 21n$. This was later improved by Frieze and Loh \cite{FL14} and recently even further improved by Ferber and Krivelevich \cite{FK15} who showed that it holds when $c=(1+o(1))n$ and $p=\frac{\log n + \log\log n +\omega(1)}{n}$. Here, the number of colours is asymptotically optimal and the bound on $p$ is optimal. Bal and Frieze \cite{BF13} examined the case when the number of colours is exactly optimal, showing that $G_n(n,p)$ a.a.s.\ contains a rainbow Hamilton cycle as long as $p =\Omega\of{\frac{\log n}{n}}$.

In this manuscript, we investigate the emergence of rainbow spanning structures in the random geometric graph.
Our main contribution is to extend~\eqref{eq:HCPM} to a rainbow context. We show that a.a.s.\ the first edge in the edge-coloured random geometric graph process $\big(\sG(\bX;\bZ;r)\big)_{r\ge0}$ that gives minimum degree at least $2$ also creates a
rainbow Hamilton cycle, provided that the number of colours is at least $c=\lceil Kn\rceil$, where $K=K(d)>0$ is a sufficiently large constant.
Similarly, under the same assumptions (with $n$ even), the first edge in the process that ensures that the minimum degree is at least $1$ creates a rainbow perfect matching.
To state the result more precisely, let 
\begin{align*}
&
\widehat r_{\RPM} = \widehat r_{\RPM}(\bX,\bZ) =  \inf \left\{ r\ge0 : \text{$\sG(\bX;\bZ;r)$ contains a rainbow perfect matching} \right\}
\quad\text{and}
\\
&\widehat r_{\RHC} = \widehat r_{\RHC}(\bX,\bZ) =  \inf \left\{ r\ge0 : \text{$\sG(\bX;\bZ;r)$ contains a rainbow Hamilton cycle} \right\},
\end{align*}
where we use the convention that $\inf\, \emptyset = \infty$. Note that whenever $\widehat r_{\RPM}<\infty$ ($\widehat r_{\RHC}<\infty$) the infimum in the above definition is actually a minimum, and it is precisely the length of the first edge in the process $\big(\sG(\bX;\bZ;r)\big)_{r\ge0}$ that creates a rainbow perfect matching (respectively, rainbow Hamilton cycle).
\begin{thm}\label{thm:main}
Given a fixed integer $d\ge2$, there exists a sufficiently large constant $K=K(d)>0$ satisfying the following.
Let $\bX=(X_1,X_2,\ldots,X_n)$ be $n$ i.i.d.\ points in $[0,1]^d$ chosen uniformly at random, and let $\bZ=(Z_{ij})_{1\le i<j\le n}$ be a random vector of colours, choosen independently and with replacement from a set of colours of size $\lceil Kn\rceil$. Consider the random geometric graph process $\big(\sG(\bX;r)\big)_{r\ge0}$ (for any fixed $\ell_p$-normed distance, $1< p\le\infty$) with a random colouring of the edges given by $\bZ$. Then,
\[
\lim_{n\to\infty} \pr \big( \widehat r_{\RHC}(\bX,\bZ) = \widehat r_{\delta\ge2}(\bX)  \big) = 1,
\]
and for even $n$
\[
\lim_{\substack{n\to\infty\\\text{($n$ even)}}} \pr \big( \widehat r_{\RPM}(\bX,\bZ) = \widehat r_{\delta\ge1}(\bX)  \big) = 1.
\]
\end{thm}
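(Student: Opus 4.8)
The plan is to build on the (non-rainbow) result~\eqref{eq:HCPM} together with Penrose's theorem~\eqref{eq:Penrose}, and to run a two-phase exposure argument. The key structural idea is that by the time $r=\widehat r_{\delta\ge2}$, the graph $\sG(\bX;\widehat r_{\delta\ge2})$ is not merely $2$-connected but, after deleting a few ``dangerous'' vertices of low degree, contains a very well-connected dense core covering all but $O(\log n)$ vertices, while the low-degree vertices are spread out and locally each sits in a region with $\Omega(\log n)$ neighbours. This kind of structure is exactly what is extracted in~\cite{MPW11} to prove~\eqref{eq:HCPM}; I would quote or re-derive the relevant deterministic ``skeleton'' lemma and show that a.a.s.\ it holds at radius $\widehat r_{\delta\ge2}$, with a good deal of room to spare (say, a constant fraction of the edges in each local ball are available but unused).

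\medskip
\noindent\textbf{Step 1: Reduce to a robust absorber-type statement.} Using~\eqref{eq:HCPM} and~\eqref{eq:Penrose}, fix the (random) radius $r^\star=\widehat r_{\delta\ge2}(\bX)$ and condition on the point set $\bX$ together with a fixed non-rainbow Hamilton cycle $H_0$ in $\sG(\bX;r^\star)$. I would then argue that $H_0$ can be chosen with extra structural features: there are $\Omega(n)$ vertex-disjoint short ``exchangeable'' configurations along $H_0$ (e.g.\ pairs of consecutive chords realising a local rerouting), so that one can recolour-swap without destroying Hamiltonicity. The target becomes: given such a skeleton, a uniformly random colouring with $\lceil Kn\rceil$ colours a.a.s.\ admits a rainbow Hamilton cycle using only edges of $\sG(\bX;r^\star)$.

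\medskip
\noindent\textbf{Step 2: The colour-swapping / switching argument.} Take the Hamilton cycle $H_0$; it uses $n$ edges but may have colour repetitions. I would fix the colours on the rest of the graph and resample, or equivalently use a switching/absorption scheme: whenever two edges $e,f\in H_0$ share a colour, use one of the $\Omega(n)$ disjoint local reroutings to replace a stretch of the cycle through a region that, with room to spare, contains many edges whose colours are ``fresh'' relative to the current cycle. Because each local ball contains $\Theta(\log n)$ edges and only $n$ colours are forbidden out of $Kn$, a uniformly random fresh edge avoids all cycle-colours with probability bounded below by a constant; iterating over the $O(\log n)$ (or fewer) conflicts and using the independence/disjointness of the reroutes gives success a.a.s.\ by a union bound or a second-moment/martingale concentration argument. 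For the perfect matching the same scheme is easier: a perfect matching from~\eqref{eq:HCPM} has $n/2$ edges, there are $\Omega(n)$ disjoint augmenting $2$-edge configurations (alternating paths of length $3$), and a single pass of greedy colour-repair suffices.

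\medskip
\noindent\textbf{Step 3: Matching the thresholds exactly.} Finally, I note $\widehat r_{\RHC}\ge\widehat r_{\delta\ge2}$ trivially (a rainbow Hamilton cycle is a Hamilton cycle, which forces $\delta\ge2$), and $\widehat r_{\RPM}\ge\widehat r_{\delta\ge1}$ similarly, so it only remains to prove the reverse inequalities, which is exactly what Steps 1--2 deliver: a.a.s.\ a rainbow structure already exists at the minimum-degree threshold. Choosing $K=K(d)$ large enough absorbs all the constant-probability bounds above; the dependence on $d$ enters only through the volume constants governing how many edges a ball of radius $r^\star$ contains.

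\medskip
\noindent\textbf{Main obstacle.} The delicate point is Step 2: unlike in $G(n,p)$, the random geometric graph has \emph{bounded local degree} ($\Theta(\log n)$ neighbours near the connectivity threshold), so there is no dense expander to run a standard rotation--extension or colour-absorption argument globally; the repair moves must be \emph{local}, and one must guarantee enough vertex-disjoint local gadgets with enough spare edges simultaneously, while controlling the dependence between the geometry (which fixes $r^\star$ and hence the available edges) and the colouring. Handling this conditioning cleanly — ideally by exposing geometry first, extracting a deterministic skeleton lemma with explicit slack, and only then revealing colours — is where the real work lies, and it is also what dictates how large $K(d)$ must be.
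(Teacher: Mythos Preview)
Your two-phase exposure (geometry first, then colours) is the right framing and is what the paper does. But Step~2 contains a quantitative error that breaks the argument as written. You assert there are ``$O(\log n)$ (or fewer) conflicts'' on $H_0$. In fact $H_0$ has $n$ edges coloured uniformly from $\lceil Kn\rceil$ colours, so the expected number of monochromatic pairs is $\binom{n}{2}/\lceil Kn\rceil\sim n/(2K)=\Theta(n)$; equivalently, the number of edges that must be replaced to make $H_0$ rainbow is $\Theta(n/K)$. You therefore have to repair linearly many collisions, not logarithmically many. With that many repairs, several further issues arise that your sketch does not address: (i) for a union bound each repair must succeed with probability $1-o(1/n)$, which requires $\Omega(\log n)$ independent alternatives \emph{at the location of every conflict}, including near minimum-degree vertices where only $O(1)$ alternatives may exist; (ii) a conflict at edge $e$ can only be resolved by a gadget that actually removes $e$, so you need a spatial matching between $\Theta(n)$ conflicts and your $\Omega(n)$ disjoint gadgets, which is not automatic; (iii) each repair introduces fresh edges whose colours may collide with the remaining cycle edges or with other repairs, so the process need not terminate.

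The paper avoids the repair paradigm altogether. It tessellates $[0,1]^d$ into cells of side $\Theta(\eps^{1/d}r_0)$, classifies them as good (containing $\ge\eps^3\log n$ points and lying in the giant dense component), bad, or ugly, and builds rainbow pieces \emph{from scratch} cell by cell, revealing colours greedily in a fixed order. Inside each good cell one discards every edge whose colour has already appeared anywhere in the construction; since at most $n+o(n)$ colours are forbidden out of $Kn$, each vertex retains degree at least half the cell size with probability $1-o(1/n)$ (this is where $K$ is chosen large), and Dirac's theorem yields a rainbow Hamilton cycle on that cell. Bad cells receive short rainbow path forests; the ugly regions are covered by $n^{0.4+o(1)}$ edges, few enough to be rainbow by a birthday bound. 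Finally the pieces are stitched along a spanning tree of the graph of cells, each stitch choosing among $\Omega_\eps(\log n)$ candidate hook edges whose colours are still unrevealed, so failure is $o(1/n)$ per stitch. The essential difference from your plan is that the paper never starts from a global Hamilton cycle to be patched; it assembles the rainbow cycle out of local rainbow pieces whose colour constraints are secured \emph{before} any global structure is formed, so there is no cascade of $\Theta(n)$ conflicts to control.
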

\begin{remark}
For $p=1$, we can only claim that
\begin{equation}\label{eq:p1}
\lim_{n\to\infty} \pr \big( \widehat r_{\RHC}(\bX,\bZ) = \widehat r_{\2conn}(\bX)  \big) = 1
\quad\text{and}\quad
\lim_{\substack{n\to\infty\\\text{($n$ even)}}} \pr \big( \widehat r_{\RPM}(\bX,\bZ) = \widehat r_{\1conn}(\bX)  \big) = 1,
\end{equation}
and in fact it is not known whether~\eqref{eq:Penrose} holds (see~\cite{Pen99} and~\cite{Pen03}). We include a justification of~\eqref{eq:p1} in Section~\ref{sec:p1} for completeness.
\end{remark}
Combining Theorem~\ref{thm:main} and Theorem~8.4 in~\cite{Pen03}, we immediately obtain the limiting probabilities of having a rainbow perfect matching and having a rainbow Hamilton cycle, assuming that the number of colours is sufficiently large.
\begin{cor}\label{cor:limdisPM}
Under the same assumptions of Theorem~\ref{thm:main}, put
\[
r=\sqrt[d]{\frac{(2/d)\log n + (3-d-2/d) \log\log n + x}{2^{2-d}\theta n}},
\]
and let $f = \log \left( 2^{1-2/d}(\theta d)^{3-2/d}{\theta'}^{d-2} \big/ \binom{d}{2} \right)$,
where $\theta$ and $\theta'$ are the volumes of the $d$-dimensional and $(d-1)$-dimensional unit $\ell_p$-balls, respectively. Then 
\[
\lim_{\substack{n\to\infty\\\text{($n$ even)}}} \pr\big(\text{$\sG(\bX;\bZ;r)$ has a rainbow perfect matching}\big) =
\begin{cases}
0 & x\to -\infty;
\\
\exp\left(- e^{-\alpha-f}\right) & x\to \alpha;
\\
1 & x\to\infty.
\end{cases}
\]
\end{cor}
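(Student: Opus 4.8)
The plan is to derive Corollary~\ref{cor:limdisPM} as an immediate consequence of Theorem~\ref{thm:main} together with the known limiting behaviour of the threshold for having no isolated vertices in the (uncoloured) random geometric graph process, which is exactly the content of Theorem~8.4 in~\cite{Pen03}.

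First I would record two elementary monotonicity facts about the process $\big(\sG(\bX;\bZ;r)\big)_{r\ge0}$. Once a rainbow perfect matching has appeared, the same $n/2$ edges (carrying the same $n/2$ pairwise distinct colours) remain present and rainbow for every larger radius; hence the event that $\sG(\bX;\bZ;r)$ contains a rainbow perfect matching is exactly $\{\widehat r_{\RPM}(\bX,\bZ)\le r\}$. Likewise, having minimum degree at least~$1$ is preserved as $r$ grows, so the event that $\sG(\bX;r)$ has minimum degree at least~$1$ is exactly $\{\widehat r_{\delta\ge1}(\bX)\le r\}$. Let $E_n=\{\widehat r_{\RPM}(\bX,\bZ)=\widehat r_{\delta\ge1}(\bX)\}$; Theorem~\ref{thm:main} gives $\pr(E_n^c)=o(1)$ as $n\to\infty$ through even values. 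On $E_n$ the two events just described coincide, so
\[
\Big|\,\pr\big(\sG(\bX;\bZ;r)\text{ has a rainbow perfect matching}\big)-\pr\big(\sG(\bX;r)\text{ has minimum degree}\ge1\big)\,\Big|\le\pr(E_n^c)=o(1),
\]
uniformly over the choice of $r$, and in particular for the specific $r$ appearing in the corollary.

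It then remains to quote Theorem~8.4 of~\cite{Pen03}: with $r$ scaled as in the statement — that is, taken in the critical window governing the disappearance of the last isolated vertex — the probability that $\sG(\bX;r)$ has minimum degree at least~$1$ converges to $0$, to $\exp(-e^{-\alpha-f})$, or to $1$ according as $x\to-\infty$, $x\to\alpha$, or $x\to\infty$, where the constant $f$ encodes the limiting expected number of isolated vertices at this scale and $\theta,\theta'$ are the relevant $d$- and $(d-1)$-dimensional unit $\ell_p$-ball volumes. Combining this with the displayed inequality yields the three stated limits. I do not expect a substantive obstacle here: the work is purely bookkeeping — verifying that the normalisation of $r$ and the value of $f$ in the statement match those used in~\cite{Pen03}, and noting that the event $\{\widehat r_{\RPM}=\infty\}$ (possible a priori under the convention $\inf\emptyset=\infty$) is harmless, since it is contained in $E_n^c$ and hence only contributes to the $o(1)$ error term.
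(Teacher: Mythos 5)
Your proposal is correct and matches the paper's own (one-line) proof: the corollary is obtained exactly by combining Theorem~\ref{thm:main} with Theorem~8.4 of~\cite{Pen03}, using that on the event $\{\widehat r_{\RPM}=\widehat r_{\delta\ge1}\}$ the two monotone events coincide. Your write-up simply makes explicit the bookkeeping that the paper leaves implicit.
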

\begin{cor}\label{cor:limdisHC}
Under the same assumptions of Theorem~\ref{thm:main}, put
\[
r=\sqrt[d]{\frac{(2/d)\log n + (4-d-2/d) \log\log n + y}{2^{2-d}\theta n}},
\]
and let $f$, $\theta$ and $\theta'$ be as in Corollary~\ref{cor:limdisPM}. Then if $d\ge3$,
\[
\lim_{n\to\infty} \pr\big(\text{$\sG(\bX;\bZ;r)$ has a rainbow Hamilton cycle}\big) =
\begin{cases}
0 & y\to -\infty;
\\
\exp\left(- 2e^{-\alpha-f}/d\right) & y\to \alpha;
\\
1 & y\to\infty.
\end{cases}
\]
Otherwise if $d=2$,
\[
\lim_{n\to\infty} \pr\bigg(\parbox{10.1em}{$\sG(\bX;\bZ;r)$ has a\\ rainbow Hamilton cycle}\bigg) =
\begin{cases}
0 & y\to -\infty;
\\
\exp\left(- e^{-\alpha/2}\left(e^{-\alpha/2}+\frac{2\sqrt\theta}{\theta'}\right)\right) & y\to \alpha;
\\
1 & y\to\infty.
\end{cases}
\]
\end{cor}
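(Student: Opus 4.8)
The plan is to derive Corollary~\ref{cor:limdisHC} (and, in the same way, Corollary~\ref{cor:limdisPM}) by combining Theorem~\ref{thm:main} with the limiting distribution of the $2$-connectivity threshold of the \emph{uncoloured} random geometric graph provided by Theorem~8.4 in~\cite{Pen03}. First I would record an elementary coupling fact: for a \emph{deterministic} radius $r$, outside an event of probability $0$ (edge lengths are continuous and distinct, so $\pr(\widehat r_{\delta\ge2}=r)=0$),
\[
\big\{\sG(\bX;\bZ;r)\text{ has a rainbow Hamilton cycle}\big\}=\big\{\widehat r_{\RHC}\le r\big\}
\quad\text{and}\quad
\big\{\delta(\sG(\bX;r))\ge2\big\}=\big\{\widehat r_{\delta\ge2}\le r\big\},
\]
since a rainbow Hamilton cycle of $\sG(\bX;\bZ;r_0)$ persists, with the same colours, in $\sG(\bX;\bZ;r)$ for all $r\ge r_0$, and minimum degree is monotone along the process. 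These two events agree on $\{\widehat r_{\RHC}=\widehat r_{\delta\ge2}\}$, an event of probability $1-o(1)$ by Theorem~\ref{thm:main} (and the $o(1)$ does not depend on $r$). Hence, for $r$ as in the statement,
\[
\pr\big(\sG(\bX;\bZ;r)\text{ has a rainbow Hamilton cycle}\big)=\pr\big(\delta(\sG(\bX;r))\ge2\big)+o(1),
\]
and the task reduces to evaluating the limit of the right-hand side. (Corollary~\ref{cor:limdisPM} is identical with $\delta\ge2$ replaced by $\delta\ge1$, using the perfect-matching half of Theorem~\ref{thm:main}.)

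For this remaining limit I would invoke Theorem~8.4 in~\cite{Pen03}, together with~\eqref{eq:Penrose} (valid for $1<p\le\infty$) which lets one replace $\widehat r_{\2conn}$ by $\widehat r_{\delta\ge2}$. The point is that $\delta(\sG(\bX;r))\ge2$ fails precisely when some vertex has degree at most $1$, and the number of such vertices converges in distribution to a Poisson random variable whose mean $\lambda$ is the limit of the expected number of vertices of degree $\le1$; hence $\pr(\delta(\sG(\bX;r))\ge2)\to e^{-\lambda}$. This expected count splits as a sum over the boundary strata of $[0,1]^d$: near a codimension-$j$ face a vertex sees only a $\approx2^{-j}$ fraction of its $\ell_p$-ball, so the contribution of the codimension-$j$ faces is $\binom{d}{j}2^j$ times $n$ times an integral of $e^{-nV(x)}\big(1+nV(x)\big)$ over the position of the vertex within such a face and over its $j$ transverse coordinates $s_1,\dots,s_j$, where $V(x)$ denotes the volume of the radius-$r$ $\ell_p$-ball about $x$ intersected with $[0,1]^d$ and $V(x)\approx2^{-j}\theta r^d+2^{1-j}\theta'r^{d-1}(s_1+\dots+s_j)$ near the face. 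Evaluating these (essentially Laplace) integrals and substituting the prescribed $r$ — whose exponents $2/d$ and $4-d-2/d$ are calibrated exactly so that the residual powers of $n$ and $\log n$ cancel — one finds that for $d\ge3$ a single stratum, the $2$-dimensional faces (codimension $d-2$), dominates, giving $\lambda=\tfrac2d e^{-y-f}$; whereas for $d=2$ \emph{two} strata of the same order contribute, the interior (the unique $2$-dimensional face, codimension $0$) and the $1$-dimensional faces (codimension $1$), giving $\lambda=e^{-y}+\tfrac{2\sqrt\theta}{\theta'}e^{-y/2}$. Taking $y\to\alpha$ reproduces the two displayed limits. For Corollary~\ref{cor:limdisPM} the same computation with vertices of degree $0$ (isolated vertices) shows that for \emph{every} $d\ge2$ only the $2$-dimensional faces dominate, so a single exponential term survives.

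The substantial work — and where I expect the main obstacle to lie — is the constant-chasing in the previous paragraph: pinning down the dominant stratum for each pair $(d,k)$; carrying out the transverse integrals so that the geometric constant $f$ (with its powers of $2$, of the $d$-dimensional $\ell_p$-ball volume $\theta$, and of the $(d-1)$-dimensional volume $\theta'$, together with the factor $\binom d2$) comes out exactly; checking that the $\log n$- and $\log\log n$-exponents in the definition of $r$ are precisely those that make $\lambda$ finite and nonzero; and, in the $d=2$ Hamilton-cycle case, verifying that the interior and the edges of the square are genuinely of the same order while every other stratum (in particular the four corners) contributes only $o(1)$. All of this is already packaged in~\cite[Theorem~8.4]{Pen03}, so once one quotes that theorem for the correct quantity the proof amounts to a substitution; the elementary coupling argument of the first paragraph is what bridges it to the rainbow setting.
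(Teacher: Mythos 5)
Your proposal matches the paper's (implicit) proof: the corollary is stated as an immediate consequence of Theorem~\ref{thm:main} combined with Theorem~8.4 of~\cite{Pen03}, which is precisely your reduction via monotonicity of the rainbow-Hamiltonicity event along the process to $\pr(\delta(\sG(\bX;r))\ge2)+o(1)$, followed by quoting Penrose's limiting distribution. The stratum-by-stratum Poisson computation you sketch is already contained in the cited theorem, so nothing further is needed.
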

In Section~\ref{sec:tess}, we will discuss how we partition $[0,1]^d$ into a grid of small $d$-dimensional cubic cells.   Having this partition will simplify our task by allowing us to locally search for short rainbow paths or cycles within each cell or small cluster of cells. In Section~\ref{sec:localtasks}, we will show how to find these paths or cycles. In Section~\ref{sec:connect}, we will show how to connect the pieces together into one rainbow Hamilton cycle. A simple adaptation of the argument can be used to build a rainbow perfect matching. Finally, in Section~\ref{sec:p1} we will discuss the case $p=1$, and pose some open questions in Section~\ref{sec:future}.

%
%
\section{Tessellation and graph of cells}\label{sec:tess}
The main goal in this section is to prove Lemmas~\ref{lem:paths} and~\ref{lem:paths2}, which will be crucial in the construction of the rainbow Hamilton cycle and perfect matching. We will adapt and extend many ideas from~\cite{MPW11}.
Throughout the paper, $d\ge2$ and the $\ell_p$-norm ($1<p\le\infty$) in $\real^d$ are fixed.
Note that the volume $\theta$ of the unit $d$-dimensional $\ell_p$-ball satisfies
\begin{equation}\label{eq:thetabound}
2^d/d! \le \theta \le 2^d
\end{equation}
since the $\ell_p$-ball contains the $\ell_1$-ball and is contained in the $\ell_\infty$-ball, which have volume $2^d/d!$ and $2^d$ respectively. We will also  make frequent use of the following inequality throughout the argument, often without explicitly mentioning it. For any $X\in\real^d$,
\begin{equation}\label{eq:lplinf}
\|X\|_\infty \le \|X\|_p \le d \|X\|_\infty.
\end{equation}
Moreover, we will pick a sufficiently small constant $\eps=\eps(d)>0$ so that several requirements in the argument are met. Later we will choose $K$ sufficiently large with respect to this $\eps$ (recall $\lceil Kn \rceil$ is the number of colours). We remark that our choice of $\eps$ does not depend on $p$ or other parameters that may be introduced later in the statements.

We use the standard $o()$, $O()$, $\Theta()$ and $\Omega()$  asymptotic notation as $n\to\infty$ with the following extra considerations. We do not assume any sign on a sequence $a_n$ satisfying $a_n=o(1)$ or  $a_n=O(1)$, but on the other hand a sequence satisfying $a_n=\Theta(1)$ or $a_n=\Omega(1)$ is assumed to be positive for all but finitely many $n$. Furthermore, the constants involved in the bounds of the definitions of $O()$, $\Theta()$ and $\Omega()$ may depend on $d$, but not on $p$ or $\eps$. Whenever these constants depend on our choice of $\eps$ (in addition to $d$), we use the alternative notation $O_\eps()$, $\Theta_\eps()$ and $\Omega_\eps()$ instead. Our asymptotic statements are always uniform for all $1<p\le\infty$ as a consequence of bounds~\eqref{eq:thetabound} and~\eqref{eq:lplinf}.

Let $\omega = \omega(n) \to\infty$ be some function tending to infinity sufficiently slowly (in particular, $\omega = o(\log \log n)$). We define $r_0$ and $r_1$ by
\begin{align*}
\theta n {r_0}^d &= (2^{d-1}/d)\log n + 2^{d-2}(3-d-2/d) \log\log n - \omega
\qquad\text{and}\qquad
\\
\theta n {r_1}^d &= (2^{d-1}/d)\log n + 2^{d-2}(4-d-2/d) \log\log n + \omega.
\end{align*}
Then, by Theorem~8.4 in~\cite{Pen03}, the respective lengths $\widehat r_{\delta\ge1}$ and $\widehat r_{\delta\ge2}$ of the critical edges of the process  $\big(\sG(\bX;r)\big)_{r\ge0}$ that give minimum degree 1 and 2 satisfy
\begin{equation}\label{eq:r0rd2}
r_0 \le\widehat r_{\delta\ge1} \le\widehat r_{\delta\ge2}\le r_1 \sim r_0
\qquad
\text{a.a.s.}
\end{equation}
Our argument will use edges of length at most $r_0$ to construct most of the rainbow perfect matching or Hamilton cycle, and only use a few longer edges of length up to $\widehat r_{\delta\ge1}$ or $\widehat r_{\delta\ge2}$ at some exceptional places.

Let $s'=(2\eps d\theta)^{1/d} r_0 /2$.
We tessellate $[0,1]^d$ into $d$-dimensional cubic \emph{cells} of side length
\[
s=\left\lceil  ( s' )^{-1} \right\rceil^{-1} \sim s' = \Theta(\eps^{1/d} r_0),
\]
that is, of volume
\[
s^d \sim \eps d 2^{1-d} \theta {r_0}^d  \sim \eps \log n/ n,
\]
arranged in a grid fashion. Let $\cC$ denote the set of cells. There are $\Theta_\eps(n/\log n) = o(n)$ cells in $\cC$, where we recall that the constant hidden in the $\Theta_\eps(\cdot)$ notation depends on $\eps$ (and $d$).
Clearly (assuming that $\eps$ is sufficiently small given $d$ and by~\eqref{eq:thetabound}), the vertices inside each cell induce a clique in $\sG(\bX;r_0)$, and in fact a stronger property holds in view of the following definition.
\begin{defn}
The  {\em graph of cells} $\sG_\cC$ is a graph with vertex set $\cC$ (i.e.~the set of cells of the tessellation), and two cells are adjacent in $\sG_\cC$ if they are at ($\ell_p$-normed) distance at most $r_0 - 2ds = (1- \Theta(\eps^{1/d})) r_0$.
\end{defn}
This implies that for any pair of adjacent cells and any pair of points $X_i,X_j\in \bX$ that belong to these cells, $X_i$ and $X_j$ must be adjacent in the graph $\sG(\bX;r_0)$. (This is true regardless of the $\ell_p$-norm being used, in view of~\eqref{eq:lplinf}.)
The degree of a cell $C$ in the graph of cells $\sG_\cC$ is at most the number of cells contained in a ball of radius $r_0$ centered at the center of $C$.  As each cell has volume $\Theta(\eps {r_0}^d)$ and the ball of radius $r_0$ has volume $\theta {r_0}^d$, we deduce that
\begin{equation}\label{eq:Delta}
\text{the maximum degree of the graph of cells is $\Delta(\sG_\cC)  = O(1/\eps)$.}
\end{equation}
Note that the number of points of $\bX$ that fall into each cell is distributed as $\text{Bin}(n, s^d)$ with expectation $s^dn\sim \eps \log n$. Then, we can easily bound the maximum number of points in a cell.
\begin{lem}\label{lem:maxdensity}
A.a.s.\ no cell in $\cC$ contains more than $\log n$ vertices of $\bX$.
\end{lem}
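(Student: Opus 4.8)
The plan is to prove this by a Chernoff estimate on a single cell together with a union bound over all of $\cC$. First I would fix a cell $C\in\cC$ and recall that the number $N_C$ of points of $\bX$ lying in $C$ is distributed as $\Bin(n,s^d)$, with mean $\mu:=s^dn$, which by the computation just before the lemma satisfies $\mu\sim\eps\log n$; in particular $\mu\le 2\eps\log n$ for all sufficiently large $n$. The crucial observation is that, since $\eps=\eps(d)$ is a small constant, the target threshold $\log n$ is a large multiple (roughly $1/(2\eps)$) of the mean $\mu$, so the upper tail of $N_C$ evaluated at $\log n$ is polynomially small in $n$ with an exponent that we can make as negative as we wish by shrinking $\eps$.

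Concretely, I would invoke the standard multiplicative Chernoff bound in the form $\pr(N_C\ge a)\le (e\mu/a)^{a}$, valid for $a\ge\mu$, applied with $a=\log n$ (which does exceed $\mu$ once $n$ is large). Since $e\mu/a\le 2e\eps$, this gives $\pr(N_C\ge\log n)\le (2e\eps)^{\log n}=n^{\log(2e\eps)}$. Choosing $\eps=\eps(d)$ small enough that $\log(2e\eps)\le -2$ — this is one of the ``several requirements'' that $\eps$ must meet — the per-cell bound becomes $\pr(N_C\ge\log n)\le n^{-2}$ for every cell $C$ and every sufficiently large $n$.

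Finally I would take a union bound over the $|\cC|=\Theta_\eps(n/\log n)=o(n)$ cells, obtaining $\pr(\text{some cell of }\cC\text{ contains at least }\log n\text{ vertices})\le |\cC|\cdot n^{-2}=o(n^{-1})\to 0$, which is exactly the claim. As for the main obstacle: there is essentially none, this being a routine concentration argument. The only thing to keep track of is the bookkeeping of constants: the union bound requires a per-cell failure probability of $o(1/|\cC|)=o(\log n/n)$, which forces the Chernoff exponent to be at most $-1-\Omega(1)$; since $\eps$ is at our disposal and may always be taken smaller, this is harmless, and it is worth recording that Lemma~\ref{lem:maxdensity} is one of the places where an upper bound on $\eps$ is pinned down.
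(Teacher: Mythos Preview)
Your proof is correct and is essentially the same as the paper's: the paper bounds $\pr(\Bin(n,s^d)\ge\log n)\le\binom{n}{\lceil\log n\rceil}s^{d\lceil\log n\rceil}\le(e\eps+o(1))^{\lceil\log n\rceil}=o(1/n)$ using $e\eps<e^{-1}$, and then takes a union bound over the $O_\eps(n/\log n)$ cells --- this is precisely your Chernoff-type bound $(e\mu/a)^a$ with $\mu=s^dn\sim\eps\log n$ and $a=\log n$. The only cosmetic difference is that you work with the cruder estimate $\mu\le 2\eps\log n$ and demand $\log(2e\eps)\le-2$, whereas the paper uses the asymptotic and the milder condition $e\eps<e^{-1}$, but the structure is identical.
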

\begin{proof}
\[
\pr\left[\text{Bin}(n, s^d) \ge \log n \right] \le \binom{n}{\lceil\log n\rceil} s^{d\lceil\log n\rceil} \le
\big( e\eps + o(1)\big)^{\lceil\log n\rceil} = o(1/n),
\]
since $e\eps < e^{-1}$ (assuming $\eps$ is sufficiently small). Thus, a union bound over all $O_\eps(n/\log n)$ cells shows that a.a.s.\ none has more than $\log n$ many points.
\end{proof}
Similarly, we can perform analogous calculations to bound the number of vertices of $\bX$ that fall inside of a ball of radius $\ell r_1$ centered around a vertex $X\in\bX$, take a union bound over all $n$ choices of $X$, and conclude the following.
\begin{lem}\label{lem:DeltaRGG}
Given any constant $\ell \in\nat$, the maximum degree of the power graph ${\sG(\bX;r_1)}^\ell$ is a.a.s.\ $O(\log n)$.
\end{lem}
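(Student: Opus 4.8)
The plan is to reduce the statement to a routine first- and second-moment-style union bound, exactly mirroring the proof of Lemma~\ref{lem:maxdensity}. Fix a vertex $X\in\bX$. Any vertex $Y$ of $\bX$ at graph-distance at most $\ell$ from $X$ in $\sG(\bX;r_1)$ lies within Euclidean ($\ell_p$-normed) distance $\ell r_1$ of $X$: indeed, a path of length at most $\ell$ consists of at most $\ell$ edges, each of length at most $r_1$, so the triangle inequality gives $\|X-Y\|_p\le \ell r_1$. Hence the degree of $X$ in ${\sG(\bX;r_1)}^\ell$ is at most the number of \emph{other} points of $\bX$ falling inside the ball $B(X,\ell r_1)$. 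So it suffices to show that a.a.s.\ no ball of the form $B(X_i,\ell r_1)$ with $X_i\in\bX$ contains more than $c\log n$ points of $\bX$, for a suitable constant $c=c(d,\ell)$.

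Next I would estimate the probability that a fixed ball $B(X_i,\ell r_1)$ contains many points. Conditioning on the location of $X_i$, each of the remaining $n-1$ points lands in $B(X_i,\ell r_1)$ independently with probability at most $\vol\big(B(X_i,\ell r_1)\big)\le \theta(\ell r_1)^d = \ell^d\,\theta n r_1^d/n$. Recalling $\theta n r_1^d = (2^{d-1}/d)\log n + O(\log\log n) = O(\log n)$, this probability is at most $A\log n/n$ for a constant $A=A(d,\ell)$. Therefore the number of points (other than $X_i$) in the ball is stochastically dominated by $\Bin(n, A\log n/n)$, whose expectation is $A\log n$. A standard Chernoff/binomial-tail bound then gives, for $m=\lceil c\log n\rceil$ with $c$ chosen large enough relative to $A$,
\[
\pr\big[\Bin(n, A\log n/n)\ge c\log n\big]\le \binom{n}{m}\of{\frac{A\log n}{n}}^{m}\le \of{\frac{eA}{c}}^{m} = o(1/n),
\]
since $eA/c<1$ once $c>eA$. (This is uniform over the choice of $\ell_p$-norm because $\theta$ is bounded as in~\eqref{eq:thetabound}.) A union bound over the $n$ choices of the center $X_i\in\bX$ then shows that a.a.s.\ every such ball has at most $c\log n=O(\log n)$ points, hence $\Delta\big({\sG(\bX;r_1)}^\ell\big)=O(\log n)$ a.a.s.

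I do not anticipate a serious obstacle here; the only mild subtlety is that the balls $B(X_i,\ell r_1)$ are centered at random points of $\bX$ rather than at fixed locations, so one should condition on $X_i$ and apply the bound for the remaining $n-1$ points uniformly over all possible positions of $X_i$ in $[0,1]^d$, using that $\vol(B(x,\ell r_1)\cap[0,1]^d)\le \vol(B(x,\ell r_1))=\theta(\ell r_1)^d$ regardless of $x$. With that observation in place the argument is identical in spirit to Lemma~\ref{lem:maxdensity}.
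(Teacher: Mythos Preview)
Your proposal is correct and is precisely the argument the paper has in mind: bound the degree in the $\ell$-th power graph by the number of points in the ball $B(X_i,\ell r_1)$, use $\theta n r_1^d = O(\log n)$ to get a $\Bin(n,O(\log n)/n)$ upper tail, and take a union bound over the $n$ centers. The only nitpick is the clause ``since $eA/c<1$ once $c>eA$'': to get $o(1/n)$ rather than merely $o(1)$ you need $c\log(c/(eA))>1$, but you already stipulated that $c$ is chosen large enough relative to $A$, so this is harmless.
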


\begin{defn}\label{def:densesparse}
We say a cell is \emph{dense} if it has at least $\eps^3\log n$ points of $\bX$ in it, and is otherwise \emph{sparse}.
\end{defn}
Note that this definition is different from the corresponding notions in~\cite{MPW11} and~\cite{BBKMW11}, which only require dense cells to contain a large but constant number of points of $\bX$. We will show we cannot have too many sparse cells or too large connected sets of sparse cells in the graph of cells $\sG_\cC$ (or in its $\ell$-th power ${\sG_\cC}^\ell$, for a fixed $\ell\in\nat$).
We shall also take into account whether these cells are ``close'' to the boundary of the cube $[0,1]^d$.
To make this precise, define $F^\beta_j = [0,1]^{j-1}\times\{\beta\}\times[0,1]^{d-j}$ for $\beta\in\{0,1\}$ and $j\in\{1,2,\ldots,d\}$. These are the $2d$ facets (i.e.~$(d-1)$-dimensional faces) of the boundary of $[0,1]^d$.
\begin{lem}\label{lem:fewsparse}
\hspace{0cm}
\begin{enumerate}
\item A.a.s.\ the number of sparse cells is at most $n^{1-\eps /2}$.
\item Moreover, for any arbitrary constants $A>0$ and $\ell\in\nat$, a.a.s.\ the power graph ${\sG_\cC}^{\ell}$
\begin{enumerate}
\item
has no connected set $\cS$ of at least $(1+\eps)/\eps$ cells which are all sparse;
\item has no connected set $\cS$ of at least $\frac{d-i}{d} (1+\eps)/\eps$ cells which are all sparse and such that some cell in $\cS$ lies within distance $A r_0$ from at least $i$ facets of $[0,1]^d$ (for $ i\in\{0,1,\ldots,d-1\}$); 
\item has no sparse cell within distance $A r_0$ from $d$ facets of $[0,1]^d$. 
\end{enumerate}
\end{enumerate}
\end{lem}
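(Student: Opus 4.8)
The plan is a first-moment argument for all four assertions, resting on a single large-deviation estimate. Put $g(\eps):=\eps-\eps^3\big(1+2\log(1/\eps)\big)$, and note that once $\eps$ is small enough (which we assume throughout) one has $g(\eps)>\eps/2$ and, more importantly, $(1+\eps)g(\eps)/\eps>1+\eps/2$. Every cell has volume $s^d$ with $s^dn\sim\eps\log n$, so the number of points of $\bX$ lying in a union of $k$ distinct cells $C_1,\dots,C_k$ is $\Bin(n,ks^d)$ with mean $\sim k\eps\log n$; if all of $C_1,\dots,C_k$ are sparse this count drops below $k\eps^3\log n$, an $\eps^2(1+o(1))$ fraction of its mean. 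I would bound this deviation with the Chernoff inequality $\pr[\Bin(n,q)\le a]\le(nq/a)^a e^{a-nq}$ (valid for $a\le nq$), keeping the factor $e^{a-nq}=n^{-k\eps(1+o(1))}$ rather than discarding it as a crude multiplicative bound would; this yields, uniformly over $k=O_\eps(1)$,
\[
\pr\big[\text{$C_1,\dots,C_k$ all sparse}\big]\le\pr\big[\,|\bX\cap(C_1\cup\dots\cup C_k)|<k\eps^3\log n\,\big]\le n^{-kg(\eps)+o(1)}.
\]
Everything below follows from this estimate by the first moment method.

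Assertion (1) is immediate: the expected number of sparse cells is at most $|\cC|\,n^{-g(\eps)+o(1)}=n^{1-g(\eps)+o(1)}=o\big(n^{1-\eps/2}\big)$ since $g(\eps)>\eps/2$, so by Markov's inequality a.a.s.\ there are at most $n^{1-\eps/2}$ of them. For part (2) I would use two further ingredients. First, by~\eqref{eq:Delta} we have $\Delta(\sG_\cC)=O(1/\eps)$, hence $\Delta(\sG_\cC^\ell)=O_\eps(1)$; since the number of connected $k$-vertex subsets of a bounded-degree graph containing a prescribed vertex is at most $C^k$ for a suitable constant (a standard fact), the number of connected $k$-subsets of $\sG_\cC^\ell$ is at most $|\cC|\,C^k=n^{1+o(1)}$, with $C=C(d,\eps,\ell)$. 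Second, a geometric count of cells near the boundary: in the $\ell_p$ norm the distance from a point to the hyperplane $\{x_j=\beta\}$ equals $|x_j-\beta|$ for every $1\le p\le\infty$, so the cells within distance $Ar_0$ of a facet $F^\beta_j$ are exactly those whose $j$-th index is one of $O_{A,\eps}(1)$ many values; since for $n$ large $Ar_0<\tfrac12$, no cell is within $Ar_0$ of two opposite facets, so the number of cells within distance $Ar_0$ of at least $i$ facets is $O_{A,\eps}\big((n/\log n)^{(d-i)/d}\big)$, which is $O_{A,\eps}(1)$ for $i=d$.

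It remains to apply the first moment method in each case. For (2a), take $k=\lceil(1+\eps)/\eps\rceil$; the expected number of connected all-sparse $k$-subsets of $\sG_\cC^\ell$ is at most $|\cC|\,C^k\,n^{-kg(\eps)+o(1)}\le n^{1+o(1)}n^{-kg(\eps)}\le n^{-\eps/2+o(1)}\to0$, using $kg(\eps)\ge(1+\eps)g(\eps)/\eps>1+\eps/2$. Since any larger connected all-sparse set contains a connected all-sparse subset of size exactly $k$ (repeatedly delete a leaf of a spanning tree), (2a) holds a.a.s. For (2b) with $1\le i\le d-1$, take $k=\lceil\tfrac{d-i}{d}(1+\eps)/\eps\rceil$ and first choose the distinguished boundary cell ($O_{A,\eps}((n/\log n)^{(d-i)/d})$ ways) and then a connected $k$-set through it ($\le C^k$ ways); the expected number of such configurations is at most $n^{(d-i)/d+o(1)}n^{-kg(\eps)}\le n^{-\frac{d-i}{d}\cdot\eps/2+o(1)}\to0$, and a larger bad configuration shrinks to a size-$k$ one still containing the distinguished cell (delete a leaf of the spanning tree other than that cell, which exists), so (2b) holds a.a.s.; the case $i=0$ is (2a). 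Finally (2c) is the case $i=d$: there are only $O_{A,\eps}(1)$ cells within distance $Ar_0$ of $d$ facets, each sparse with probability $n^{-g(\eps)+o(1)}=o(1)$, so a union bound finishes.

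The hard part — and the reason for the exact numbers $\eps^3\log n$, $(1+\eps)/\eps$, and $\tfrac{d-i}{d}(1+\eps)/\eps$ in the statement — is extracting from the Chernoff bound the estimate $n^{-kg(\eps)+o(1)}$ with $g(\eps)$ essentially equal to $\eps$ and not merely $\eps/2$: this is precisely what makes $kg(\eps)$ exceed $1$ (respectively $\tfrac{d-i}{d}$) by a positive constant once $k$ reaches $(1+\eps)/\eps$ (respectively $\tfrac{d-i}{d}(1+\eps)/\eps$), which is what lets the first moment overcome the $n^{1+o(1)}$ (respectively $n^{(d-i)/d+o(1)}$) possible positions. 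Everything else is routine bookkeeping: the bounded-degree count of connected cell-clusters and the volume count of cells near $i$ facets.
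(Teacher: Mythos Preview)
Your argument is correct and follows essentially the same first-moment strategy as the paper: bound the probability that $k$ given cells are all sparse by roughly $n^{-k\eps}$, count the candidate connected $k$-sets near $i$ facets as $O_\eps\big((n/\log n)^{(d-i)/d}\big)$, and use $(1+\eps)g(\eps)/\eps>1$ to make the expectation vanish. The one noteworthy technical difference is how you obtain the joint bound: the paper bounds a single cell's sparsity probability and then invokes negative correlation to get $\pr[\text{$k$ cells sparse}]\le n^{-k\eps(1-\eps/2)}$, whereas you observe that if all $k$ cells are sparse then their \emph{union} contains fewer than $k\eps^3\log n$ points and apply Chernoff directly to $\Bin(n,ks^d)$; this sidesteps the correlation argument entirely and yields the same exponent $n^{-kg(\eps)+o(1)}$ (indeed the paper's single-cell exponent $-\eps+\eps^3\log(e/\eps^2)$ is exactly your $-g(\eps)$). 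Your treatment of the reduction to size exactly $k$ while retaining the distinguished near-boundary cell is also slightly more explicit than the paper's.
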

\begin{remark}\label{rem:fewsparse}\hspace{0cm}
\begin{enumerate}
\item
The property ``of at least $\frac{d-i}{d} (1+\eps)/\eps$ cells'' in the statement can be replaced by ``of total volume at least $(1+\eps)\frac{d-i}{2^{d-1}}\theta{r_0}^d$'', and the claim is still valid.
\item
Note that this lemma provides an analogue of Lemma~4 in~\cite{MPW11}. However, the latter gives a $\frac{d-i}{d} (1+\alpha)/\eps$ bound on the size of $\cS$, for $\alpha$ arbitrarily small (possibly much smaller than any fixed function of $\eps$). Here, we cannot achieve that, given our more restrictive definition of dense cell (i.e.~sparse cells are more abundant). However, the current statement will suffice for our purposes.
In particular, Lemma~4 in~\cite{MPW11} is used in the proof of Lemma~5 in~\cite{MPW11} with $\alpha=\Theta(\eps^{1/d})$, which is greater than $\eps$ (if $\eps$ is sufficiently small), and thus this situation is covered by our present statement.
Hence, Lemma~5 in~\cite{MPW11} is still valid with our definition of dense cells, since we can replace all uses of Lemma~4~\cite{MPW11} in the proof by its counterpart in this manuscript.

\end{enumerate}
\end{remark}
\begin{proof}[Proof of Lemma~\ref{lem:fewsparse}]
Recalling that the number of points in any fixed cell is distributed as $\text{Bin}(n, s^d)$, the probability that a cell is sparse is  
(with the convention that $(a/0)^0=1$ for all $a\in\real$)
\begin{align}
\sum_{k=0}^{\lceil\eps^3 \log n\rceil-1} \binom{n}{k} s^{dk} \of{1- s^d}^{n-k} 
&\le \sum_{k=0}^{\lceil\eps^3 \log n\rceil-1} \bfrac{ens^d}{k(1-s^d)}^k  e^{- s^d n}
\notag\\
&\le \lceil\eps^3 \log n\rceil  \bfrac{e+o(1)}{\eps^2}^{\eps^3\log n}  e^{- (\eps+o(1))\log n}
\notag\\
&=  n^{- \eps + \eps^3\log(e/\eps^2) +o(1)}
\notag\\
&\le  n^{- \eps(1-\eps/2)} \quad \text{(for large n),}
\notag
\end{align}
provided that $\eps$ is chosen so that $\eps\log(e/\eps^2) < 1/2$. Note that this is possible since $\eps\log(e/\eps^2) \to 0$ as $\eps\to 0$.  In particular, this condition implies that $\eps < 1$ and so the number of sparse cells is a.a.s.\ at most $n^{1-\eps/2}$ by Markov's inequality. This proves part~1.

Fix $A>0$, $\ell\in\nat$ and $ i\in\{0,1,\ldots,d-1\}$. In order to prove~2(b), it is enough to show that ${\sG_\cC}^{\ell}$ contains no connected set of exactly $\lceil \frac{d-i}{d} (1+\eps)/\eps\rceil$ sparse cells within distance $A r_0$ from $i$ facets of $[0,1]^d$.
Observe that the events that two or more cells are sparse are negatively correlated. Therefore,
\begin{equation}
\label{eq:psparse}
\text{the probability that $k$ given cells are sparse is at most $n^{-k\eps(1-\eps/2)}$.}
\end{equation}
If we also choose $\eps$ small enough so that $(1-\eps/2)(1+\eps) > 1$, the probability that a given set of $\lceil((d-i)/d)(1+\eps)/\eps\rceil$ cells are all sparse is $o(n^{-(d-i)/d})$. Since there are only $O_\eps\of{(n/\log n)^{(d-i)/d}}$ possible connected sets of $\lceil((d-i)/d)(1+\eps)/\eps\rceil$ cells in the power graph ${\sG_\cC}^{\ell}$ lying within distance $A r_0$ from $i$ facets of $[0,1]^d$, we can take the union bound and complete the proof of part~2(b). Part~2(a) follows as a particular case of part~2(b) taking $i=0$. 
Finally, the expected number of sparse cells within distance $A r_0$ from $d$ facets of $[0,1]^d$ is at most $O_\eps(1)$ times $n^{- \eps(1-\eps/2)}$, which is $o(1)$. This immediately yields part~2(c) and completes the proof.
\end{proof}

Given a set of cells $\cS\subseteq\cC$, we denote by $\sG_\cC[\cS]$ the subgraph of the graph of cells induced by $\cS$.
\begin{defn}\label{defn3}
Let $\cD$ be the set of dense cells, and let $\cG$ be the set of cells in the largest component of $\sG_\cC[\cD]$ (i.e.~the subgraph of $\sG_\cC$ induced by dense cells). If there are two or more such largest components, pick one according to any arbitrary deterministic rule (we will see that a.a.s.\ $\cG$ is very large, so the choice of $\cG$ is unique). Call cells in $\cG$ {\em good}. Cells that are not good, but are adjacent in the graph of cells to some good cell are called {\em bad}. Bad cells must be sparse by construction.
Cells that are not adjacent to good cells are called {\em ugly}. Note that ugly cells may be dense or sparse. Let $\cB$ and $\cU$ denote the set of bad and ugly cells, respectively.
\end{defn}

As a crucial ingredient in our argument, we will use Lemma~5 in~\cite{MPW11}, which shows that a.a.s.\  ugly cells (which are called ``bad" in that paper) appear in small clusters far enough from each other. (Note that this result is stated for dimension $d=2$, and then extended to general $d\ge2$ in Section~4 of~\cite{MPW11}.) Unfortunately, the definition of dense cell we use in the present manuscript is more restrictive than the one in~\cite{MPW11} (they only require a dense cell to contain at least $M$ points, for a large constant $M>0$; whilst here we require at least $\eps^3\log n$ points).
In order to overcome this minor obstacle, we simply observe that our Lemma~\ref{lem:fewsparse} extends Lemma~4 in~\cite{MPW11} to a less restrictive notion of sparse cell (although with a slightly weaker bound).
In view of Remark~\ref{rem:fewsparse}(2), the proof of Lemma~5 in~\cite{MPW11} is also valid in our setting by trivially replacing Lemma~4 in~\cite{MPW11} by Lemma~\ref{lem:fewsparse} of the present paper. Hence, adapting Lemma~5 in~\cite{MPW11} to our current notation, we obtain the following statement.
\begin{lem}[\cite{MPW11}]\label{lem:ugly1}
A.a.s.\ all connected components of $\sG_\cC[\cU]$ have $\ell_\infty$-diameter at most $4d^2 s$. 
\end{lem}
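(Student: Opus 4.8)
The plan is to reduce Lemma~\ref{lem:ugly1} to Lemma~5 in~\cite{MPW11} via the compatibility already set up in Remark~\ref{rem:fewsparse}(2), and then briefly indicate why the diameter bound $4d^2 s$ is the right one. First I would recall the structure of the argument in~\cite{MPW11}: ugly cells (``bad'' there) are controlled by two facts. One fact is that sparse cells do not cluster — this is exactly our Lemma~\ref{lem:fewsparse}, which plays the role of Lemma~4 in~\cite{MPW11}. The other is a percolation-type statement: the dense cells form one giant component $\cG$ in $\sG_\cC$, and any other dense component must be small and surrounded by a thin shell of sparse cells, so a cell fails to be adjacent to $\cG$ only if it lies inside a small ``pocket'' whose boundary (in the graph of cells) consists entirely of sparse cells. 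Since the graph of cells has bounded degree by~\eqref{eq:Delta} and sparse clusters have bounded size in any fixed power ${\sG_\cC}^\ell$ by Lemma~\ref{lem:fewsparse}(2), such a pocket has diameter $O_\eps(1)$ measured in cells, hence $O(s)$ in the $\ell_\infty$ metric; tracking the constants in~\cite{MPW11} gives precisely $4d^2 s$.

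The key steps, in order, would be: (i) State that the proof of Lemma~5 in~\cite{MPW11} uses Lemma~4 of that paper only through its conclusions on the scarcity and boundedness of sparse clusters (including the versions near $i$ facets), never through the precise constant $(1+\alpha)/\eps$ with $\alpha$ arbitrarily small; the particular value $\alpha=\Theta(\eps^{1/d})$ suffices there. (ii) Observe, as in Remark~\ref{rem:fewsparse}(2), that our Lemma~\ref{lem:fewsparse} supplies exactly those conclusions with the weaker but sufficient bound $\frac{d-i}{d}(1+\eps)/\eps$, because $\eps < \Theta(\eps^{1/d})$ for $\eps$ small. (iii) Conclude that substituting our Lemma~\ref{lem:fewsparse} for Lemma~4 in~\cite{MPW11} makes the entire proof of Lemma~5 in~\cite{MPW11} go through verbatim in our setting. (iv) Note that the extension from $d=2$ to general $d\ge2$ is carried out in Section~4 of~\cite{MPW11} and is unaffected by the change of definition. (v) Translate the resulting statement into the present notation (our $\cG$, $\cB$, $\cU$, and the side length $s$), yielding that a.a.s.\ every connected component of $\sG_\cC[\cU]$ has $\ell_\infty$-diameter at most $4d^2 s$.

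The main obstacle — and really the only subtle point — is step (i): one must be confident that nothing in the proof of Lemma~5 of~\cite{MPW11} secretly exploits the constant-size definition of dense cell (e.g.\ that a dense cell contributes an $\Omega(1)$-sized clique with controllable colour statistics), rather than merely the geometric/topological facts about where sparse cells can sit. Since in the present paper dense cells are \emph{more} demanding (they need $\eps^3\log n$ points rather than $O(1)$), any property of dense cells used downstream is only strengthened, and the sole place where the weaker notion could hurt is through the increased abundance of sparse cells — which is precisely what Lemma~\ref{lem:fewsparse} quantifies and absorbs. Hence the reduction is clean and the lemma follows immediately from~\cite{MPW11}.

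\begin{proof}[Proof of Lemma~\ref{lem:ugly1}]
This is an immediate consequence of Lemma~5 in~\cite{MPW11}. As explained above and in Remark~\ref{rem:fewsparse}(2), the proof of Lemma~5 in~\cite{MPW11} relies on Lemma~4 of that paper only through the statements on scarcity of sparse cells and on the bounded size of connected sparse clusters in fixed powers of the graph of cells (including the refined versions near facets of $[0,1]^d$). Our Lemma~\ref{lem:fewsparse} provides exactly these conclusions, with the bound $\frac{d-i}{d}(1+\eps)/\eps$ in place of $\frac{d-i}{d}(1+\alpha)/\eps$; since $\alpha=\Theta(\eps^{1/d}) > \eps$ for $\eps$ sufficiently small, the instance of Lemma~4 in~\cite{MPW11} invoked there is covered by Lemma~\ref{lem:fewsparse}. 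Replacing every use of Lemma~4 in~\cite{MPW11} by Lemma~\ref{lem:fewsparse}, the proof of Lemma~5 in~\cite{MPW11} goes through without further change (the passage from $d=2$ to general $d\ge2$ being carried out in Section~4 of~\cite{MPW11} and unaffected by our more restrictive definition of dense cell). Translating the resulting statement into the present notation, we obtain that a.a.s.\ every connected component of $\sG_\cC[\cU]$ has $\ell_\infty$-diameter at most $4d^2 s$.
\end{proof}
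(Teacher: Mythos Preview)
Your proposal is correct and matches the paper's approach exactly: the paper does not give a separate proof of Lemma~\ref{lem:ugly1} at all, but simply cites Lemma~5 of~\cite{MPW11}, having already justified in the paragraph preceding the statement (and in Remark~\ref{rem:fewsparse}(2)) that the proof there carries over once Lemma~4 of~\cite{MPW11} is replaced by Lemma~\ref{lem:fewsparse}. Your write-up is in fact more explicit than the paper's, which is fine.
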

Next, we obtain useful bounds on the number of bad and ugly cells.
\begin{lem}\label{lem:badugly}
A.a.s.\ there are at most $n^{1-\eps/2}$ bad cells and at most $n^{O\left(\eps^{1/d}\right)}$ ugly cells.
\end{lem}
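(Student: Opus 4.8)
The plan is to bound the number of bad cells directly by the number of sparse cells, and the number of ugly cells via a two-step counting argument that combines the sparse-cell bound from Lemma~\ref{lem:fewsparse} with the diameter bound on ugly components from Lemma~\ref{lem:ugly1}.

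First, for bad cells: by Definition~\ref{defn3} every bad cell is sparse. Hence the number of bad cells is at most the total number of sparse cells, which is a.a.s.\ at most $n^{1-\eps/2}$ by Lemma~\ref{lem:fewsparse}(1). This half is immediate.

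For ugly cells, I would argue as follows. Let $\mathcal{H}$ be any connected component of $\sG_\cC[\cU]$. By Lemma~\ref{lem:ugly1}, a.a.s.\ every such component has $\ell_\infty$-diameter at most $4d^2 s$, so it is contained in a box of side $O(s)$, which contains only $O(1)$ cells in total (the constant depending only on $d$, since it is a fixed power of $4d^2+1$). Thus each ugly component has $O(1)$ cells, and in particular contains $O(1)$ ugly cells. So it suffices to bound the number of ugly \emph{components}, and for that it is enough to bound the number of ugly cells that are sparse — more precisely, I claim every ugly component contains at least one sparse cell. Indeed, if a component of $\sG_\cC[\cU]$ consisted entirely of dense cells, then it would be a component of $\sG_\cC[\cD]$ (dense cells adjacent only to ugly, hence dense, cells); but by Definition~\ref{defn3} the dense cells not in $\cG$ form components that are not the largest, and one must check these are small. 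This is the point that needs care: I would instead invoke the stronger structural fact behind Lemma~\ref{lem:ugly1} (Lemma~5 of \cite{MPW11} in its original form gives that a.a.s.\ $\cG$ contains \emph{all but} $n^{O(\eps^{1/d})}$ cells, equivalently that the cells outside $\cG$ — which are exactly the bad and ugly cells — number at most $n^{O(\eps^{1/d})}$). If one takes that as the content imported from \cite{MPW11}, the ugly-cell bound is essentially immediate. Otherwise, one derives it: each ugly component, having $\ell_\infty$-diameter $O(s)$ and lying outside the giant dense component $\cG$, must be ``cut off'' from $\cG$ by sparse cells, and by Lemma~\ref{lem:fewsparse}(2) the connected sparse sets are all of bounded size, so the number of distinct ugly components is $O(1)$ times the number of sparse cells, i.e.\ $n^{O(\eps^{1/d})}$ — in fact even $O(n^{1-\eps/2})$, but the weaker bound stated suffices and may be what a careful accounting of ``total cells outside $\cG$'' yields.

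The main obstacle is making rigorous the claim that the cells outside the giant dense component $\cG$ — i.e.\ the bad and ugly cells together — number at most $n^{O(\eps^{1/d})}$. This is precisely the conclusion that Lemma~5 of \cite{MPW11} was designed to give (after the adaptation to our less restrictive definition of dense cell, justified in Remark~\ref{rem:fewsparse}(2)), so the cleanest route is to cite that lemma for the statement ``a.a.s.\ at most $n^{O(\eps^{1/d})}$ cells lie outside $\cG$'' and then observe that bad and ugly cells are among these; the bad-cell count is then sharpened separately to $n^{1-\eps/2}$ using that bad cells are sparse. The only genuinely new work is the bad-cell observation and the bookkeeping with the exponent $O(\eps^{1/d})$; everything else is a direct appeal to Lemmas~\ref{lem:fewsparse}, \ref{lem:ugly1}, and Remark~\ref{rem:fewsparse}.
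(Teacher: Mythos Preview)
Your treatment of bad cells is correct and matches the paper exactly. The problem is with the ugly-cell count, where there is a genuine gap.

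First, you have the inequality backwards: for small $\eps$, the quantity $n^{O(\eps^{1/d})}$ is like $n^{0.01}$, while $n^{1-\eps/2}$ is like $n^{0.99}$. So the bound $n^{O(\eps^{1/d})}$ on ugly cells is \emph{much stronger} than the bound $n^{1-\eps/2}$ on bad cells, not weaker. Your ``cut off by sparse cells'' argument, once made rigorous (each ugly component is bordered by at least one bad (hence sparse) cell, each sparse cell borders $O(1/\eps)$ ugly components, and each ugly component has $O(1)$ cells by Lemma~\ref{lem:ugly1}), yields only $O_\eps(n^{1-\eps/2})$ ugly cells. That does \emph{not} establish the lemma as stated, and it is also insufficient for the later application in Lemma~\ref{lem:ugly3}, where one needs $|\bX_\cU|\le n^{0.4}$.

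Second, your fallback of importing a count from Lemma~5 of~\cite{MPW11} is not available here: what the paper actually imports from that lemma is precisely the diameter statement recorded as Lemma~\ref{lem:ugly1}, nothing more. The count of ugly cells is derived afresh in this paper, and that derivation is the substance of the proof.

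What the paper actually does is a first-moment calculation with a geometric input you are missing. Fix a cell $C$ at $\ell_p$-distance at most $r_0$ from exactly $i$ facets of $[0,1]^d$. By a volume argument (placing a $2^{-i}$-fraction of a ball of radius $r_0-4ds$ at the appropriate corner of $C$), one shows $C$ has at least $(2^{d-i-1}/d)\eps^{-1}(1-\alpha\eps^{1/d})$ neighbours in $\sG_\cC$. If $C$ is ugly and (as Lemma~\ref{lem:ugly1} guarantees a.a.s.) has at most $(4d^2)^d$ ugly neighbours, then all remaining neighbours are bad, hence sparse. By~\eqref{eq:psparse} the probability of this is at most $n^{-(2^{d-i-1}/d)(1-\Theta(\eps^{1/d}))}$. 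Summing over the $O_\eps((n/\log n)^{(d-i)/d})$ cells near $i$ facets and over $i$, and using $d-i\le 2^{d-i-1}$, the expected count is $n^{O(\eps^{1/d})}$; Markov's inequality finishes. The facet-by-facet accounting is essential: it is exactly the boundary terms (small $d-i$) that produce the exponent $O(\eps^{1/d})$, and no argument that merely compares ugly cells to the global sparse-cell count can reach this.
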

In particular, this implies that a.a.s.\ our choice of $\cG$ in Definition~\ref{defn3} was unique.
\begin{proof}
Since bad cells are sparse by definition, the first part of the statement follows trivially from Lemma~\ref{lem:fewsparse}(1).

To prove the second part, consider a cell $C=[a_1,a_1+s]\times\cdots\times[a_d,a_d+s]$ which is at distance at most $r_0$ from exactly $i$ facets of the cube $[0,1]^d$, for some $0\le i\le d$. Without loss of generality, assume these facets are precisely $F^0_1,\ldots,F^0_i$. Let $P=(a_1+s,\ldots,a_d+s)$ (i.e.~$P$ is the point in $C$ with largest coordinates), and let $B=B(0;r_0-4ds)$ denote the $\ell_p$ ball with centre $0\in\real^d$ and radius $r_0-4ds$.
By construction, the set
\[
S = P + \left(  B \cap \left( [0,\infty)^i\times\real^{d-i} \right) \right)
\]
contains precisely those points within distance $r_0-4ds$ of $P$ and ``above'' $P$ with respect to the first $i$ coordinates. Moreover, $S$ is fully contained in the cube $[0,1]^d$, and every cell $C'\ne C$ intersecting $S$ must belong to the set $\cN$ of cells that are adjacent to $C$ in the graph of cells $\sG_\cC$.
Therefore, the number of cells in $\cN$ satisfies
\begin{align}
|\cN| + 1 &\ge \vol(S)/s^d
\notag\\
&= 2^{-i}\theta {r_0}^d(1-4ds/r_0)^d/s^d
\notag\\
&\ge  \left(2^{d-i-1}/d +o(1)\right) \eps^{-1} (1-4d^2s/r_0)
\notag\\
&\ge \left(2^{d-i-1}/d\right) \eps^{-1} (1 - \alpha\eps^{1/d}) \quad \text{(eventually)}
\label{eq:calN}
\end{align}
for some constant $\alpha=\alpha(d)>0$, where we used ${r_0}/s \sim 2( \eps d 2 \theta )^{-1/d}$ and the fact that $(1-x)^d\ge1-dx$ for $0\le x\le1$.
Furthermore, let $E_C$ be the event that $C$ is ugly and is adjacent in $\sG_\cC$ to at most $(4d^2)^d-1$ other ugly cells. This event implies that there must be a set $\cN'$ of at least
\begin{equation}
|\cN| - (4d^2)^d \ge
\left(2^{d-i-1}/d\right) \eps^{-1} (1 - 2\alpha\eps^{1/d})
\label{eq:Nprime}
\end{equation}
sparse cells adjacent to $C$. Note that the last line follows since we can choose $\epsilon$ sufficiently small, given $d$ (and $\alpha$).  
Thus, by~\eqref{eq:psparse} and summing over all $O_\eps(1)$ possible choices of such $\cN'$, we obtain that the probability of $E_C$ is at most
\[
O_\eps(1) \cdot n^{- \left(2^{d-i-1}/d\right) (1 - 2\alpha\eps^{1/d}) (1-\eps/2)}
\le n^{- \left(2^{d-i-1}/d\right) \left(1 - \Theta\left(\eps^{1/d}\right)\right) } \quad \text{(eventually)},
\]
assuming that $\eps$ is sufficiently small. Hence, summing over $i$ and over the $O_\eps \left( (n/\log n)^{(d-i)/d}\right)$ possible choices of $C$, we show that the expected number of cells that are ugly and are adjacent in $\sG_\cC$ to at most $(4d^2)^d$ other ugly cells is
\[
\sum_{i=0}^d O_\eps \left( (n/\log n)^{(d-i)/d}  n^{- \left(2^{d-i-1}/d\right) \left(1 - \Theta\left(\eps^{1/d}\right)\right) } \right)
=  n^{ O\left(\eps^{1/d}\right) },
\]
since $d-i \le 2^{d-i-1}$.
By Markov's inequality and in view of Lemma~\ref{lem:ugly1} (which implies that a.a.s.\ there are no
ugly cells that are adjacent to more than $(4d^2)^d$ other ugly cells), we conclude that a.a.s.\ there are at most $n^{ O\left(\eps^{1/d}\right) }$ ugly cells. This finishes the proof of the second statement.
\end{proof}

\begin{lem}\label{lem:ugly2}
Given any constant $A>0$, a.a.s.\ every two ugly cells lying in different components of $\sG_\cC[\cU]$ are at ($\ell_p$-normed) distance at least $Ar_0$ apart.
\end{lem}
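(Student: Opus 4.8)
\emph{Proof idea.} The plan is to show that two ugly cells lying in different components of $\sG_\cC[\cU]$ but within $\ell_p$-distance $Ar_0$ would have, between them, more sparse cells than Lemma~\ref{lem:fewsparse} allows, and then to finish by a union bound over pairs of cells. Fix $A$. We first add to the a.a.s.\ events already established the one from Lemma~\ref{lem:ugly1}, so that we may assume every component of $\sG_\cC[\cU]$ has at most $(4d^2+1)^d$ cells; in particular every ugly cell is adjacent in $\sG_\cC$ to at most $(4d^2+1)^d$ other ugly cells. It then suffices to bound, for two given cells $C_1\in\cK_1$ and $C_2\in\cK_2$ at $\ell_p$-distance less than $Ar_0$, the probability of the event $E_{C_1,C_2}$ that $C_1,C_2$ are both ugly, lie in distinct components $\cK_1\neq\cK_2$ of $\sG_\cC[\cU]$, and are each adjacent to at most $(4d^2+1)^d$ ugly cells, and then sum $\pr(E_{C_1,C_2})$ over all such ordered pairs.

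On $E_{C_1,C_2}$, since $C_j$ is ugly it has no good neighbour in $\sG_\cC$, so all but at most $(4d^2+1)^d$ of its $\Delta_j:=\deg_{\sG_\cC}(C_j)$ neighbours are \emph{bad}, hence sparse; let $\cS_j$ denote the set of these bad neighbours and $\cS=\cS_1\cup\cS_2$, a set of sparse cells contained in the bounded region $\cN(C_1)\cup\cN(C_2)$. The crucial point is a lower bound on $|\cS|$. Repeating the volume estimate behind~\eqref{eq:calN} bounds each $\Delta_j$ from below in terms of the distances from $C_j$ to the facets of $[0,1]^d$; and, because $\cK_1\neq\cK_2$, the cells $C_1$ and $C_2$ are \emph{not} adjacent in $\sG_\cC$, so the two $\ell_p$-balls of radius $\rho:=r_0-4ds$ underlying these estimates have centres more than $\rho$ apart. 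Using the elementary fact that two $\ell_p$-balls of radius $\rho$ whose centres are at distance at least $\rho$ intersect in volume at most $\tfrac12\theta\rho^d$ (valid for every $1<p\le\infty$: reduce to a linear functional supporting the unit $\ell_p$-ball and invoke the central symmetry of that ball), one gets that $\cN(C_1)\cup\cN(C_2)$, and hence $\cS$, has at least roughly $\tfrac32$ times the volume of a single such neighbourhood. Quantitatively, if $i$ is the number of facets of $[0,1]^d$ within an appropriate constant multiple of $r_0$ of $C_1$, this should give
\[
|\cS|\ \ge\ \tfrac32\cdot 2^{-i}\,\theta r_0^d\,(1-\Theta(\eps^{1/d}))\big/s^d\ =\ \tfrac32\cdot\frac{2^{d-i-1}}{\eps d}\,(1-\Theta(\eps^{1/d})).
\]

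With this the union bound is routine. By~\eqref{eq:psparse}, $\pr(E_{C_1,C_2})\le O_{A,\eps}(1)\cdot n^{-|\cS|\,\eps(1-\eps/2)}$, the $O_{A,\eps}(1)$ accounting for the $O_{A,\eps}(1)$ possible choices of the set $\cS$ inside the bounded region $\cN(C_1)\cup\cN(C_2)$. There are $O_\eps\big((n/\log n)^{(d-i)/d}\big)$ cells $C_1$ with a given value of $i$, and, given $C_1$, only $O_{A,\eps}(1)$ cells $C_2$ within distance $Ar_0$; summing over all of these and over $i\in\{0,\dots,d\}$ yields $o(1)$, because $3\cdot 2^{a-2}>a$ for every integer $a\ge1$ (with a gap of at least $\tfrac12$), so for $\eps$ small enough $|\cS|\,\eps(1-\eps/2)>(d-i)/d$ with room to spare. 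The borderline value $i=d$, corresponding to a corner of the cube, is handled separately: there are only $O_{A,\eps}(1)$ cells within $Ar_0$ of $d$ facets, and a sparse cell there already contradicts Lemma~\ref{lem:fewsparse}(2c).

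The step I expect to be the main obstacle is the lower bound on $|\cS|$. The non-nesting fact for $\ell_p$-balls stated above is short, but turning it into the displayed inequality requires care near the boundary of $[0,1]^d$: the amount by which a facet truncates a neighbourhood $\cN(C_j)$ depends on the precise distance from $C_j$ to that facet, not merely on whether that distance is below $r_0$, and in the delicate subcase where $C_1$ and $C_2$ are close and near the same facets one must notice that the facet in question does not in fact truncate one of the two neighbourhoods substantially (so that neighbourhood is already large enough on its own). Keeping this bookkeeping consistent with the thresholds of Lemma~\ref{lem:fewsparse}(2) for the relevant number $i$ of nearby facets is the bulk of the work; the remaining estimates are variations on those in the proofs of Lemmas~\ref{lem:badugly} and~\ref{lem:ugly1}.
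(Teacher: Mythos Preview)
Your proposal is correct and follows essentially the same route as the paper: both arguments hinge on showing that two non-adjacent ugly cells within distance $Ar_0$ would force a set of sparse cells of size at least $(3/2)\cdot 2^{d-i-1}\eps^{-1}/d\,(1-\Theta(\eps^{1/d}))$ in a bounded region, which is too many by Lemma~\ref{lem:fewsparse}. The only cosmetic differences are that the paper assumes the a.a.s.\ conclusions of Lemmas~\ref{lem:fewsparse} and~\ref{lem:ugly1} up front and then argues deterministically by direct contradiction with Lemma~\ref{lem:fewsparse}(2b) (rather than redoing an explicit union bound over pairs via~\eqref{eq:psparse} as you do), and that for the $3/2$ volume bound the paper writes out an explicit halfspace construction---taking the Euclidean bisecting hyperplane $H'$ of $P'-P$, observing that $B_i\cap H'$ has at least half the volume of $B_i$, and arguing that $P+B_i$ and $P'+(B_i\cap H')$ are disjoint---in place of your intersection-of-balls formulation; this is precisely the device that handles the boundary case you flag as the main obstacle, and your central-symmetry idea is essentially the same observation.
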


\begin{proof}
We will assume that the a.a.s.\ conclusions of Lemmas~\ref{lem:fewsparse} and~\ref{lem:ugly1} hold, and deterministically prove that any pair of nonadjacent cells in $\sG_\cC$ at distance less than $Ar_0$ cannot both be ugly.
Thus, consider any two different cells $C=[a_1,a_1+s]\times\cdots\times[a_d,a_d+s]$ and $C'=[a'_1,a'_1+s]\times\cdots\times[a'_d,a'_d+s]$ that are not adjacent in the graph of cells $\sG_\cC$, but are at distance less than $Ar_0$ from each other. Suppose that there are exactly $i$ facets of the $d$-cube $[0,1]^d$ (for some $0\le i\le d$) at distance less than $r_0$ from  $C$ or $C'$. Assume, without loss of generality, that these facets are precisely $F^0_1,\ldots,F^0_i$. In particular, both $C$ and $C'$ must be at distance at most $(A+1)r_0+2ds \le (A+2)r_0$ from these $i$ facets (for $\eps$ sufficiently small given $d$), and at distance at least $r_0$ from any other facet.

We will proceed in a similar fashion as in the proof of Lemma~\ref{lem:badugly} in order to describe a large set of cells that are adjacent to $C$ or $C'$.
Let $P=(a_1+s,\ldots,a_d+s)$ and $P'=(a'_1+s,\ldots,a'_d+s)$ be respectively the points in $C$ and $C'$ with largest coordinates. 
The hyperplane in $\real^d$ orthogonal to vector $P'-P$ (with respect to the Euclidean inner product) and passing through the origin splits $\real^d$ into two halfspaces
\[
H = \{Q\in\real^d : \langle Q, P'-P\rangle \le 0 \}
\qquad\text{and}\qquad
H' = \{Q\in\real^d : \langle Q, P'-P\rangle \ge 0 \}.
\]
Consider the set
\[
B_i =   B \cap \left( [0,\infty)^i\times\real^{d-i} \right)
\]
where $B=B(0;r_0-4ds)$ denotes the $\ell_p$ ball with centre $0\in\real^d$ and radius $r_0-4ds$. Since $B_i$ has volume 
$2^{-i}\theta{r_0}^d(1-4ds/r_0)^d$, at least one of the sets $B_i\cap H$ or $B_i\cap H'$ has volume at least half of this. Assume it is $B_i\cap H'$ (by  otherwise reversing the roles of $P$ and $P'$).
Define the sets
\[
S = P + B_i
\qquad\text{and}\qquad
S' = P' + B_i\cap H'.
\]
Since $C$ and $C'$ are not adjacent in $\sG_\cC$, $P$ and $P'$ must be at distance greater than $r_0-2ds$. In particular, $P'\notin S$. Moreover, by our choice of $H'$, any other point in $S'$ is further away from $P$ than $P'$ is, so $S$ and $S'$ must be disjoint sets. Let $\cN$ be the set of cells different than $C$ and $C'$ intersecting $S\cup S'$. By construction, every cell in $\cN$ must be adjacent in the graph of cells $\sG_\cC$ to $C$ or $C'$. Moreover, since $\vol(S \cup S') \ge (3/2)\vol(S)$, we can use~\eqref{eq:calN} to infer
\[
|\cN| + 2 \ge \vol(S \cup S')/s^d
\ge (3/2)\left(2^{d-i-1}/d\right) \eps^{-1} (1 - \alpha \eps^{1/d})
\ge (4/3)\frac{d-i}{d} \eps^{-1},
\]
for $\eps$ sufficiently small given $d$, where we used that $d-i \le 2^{d-i-1}$.
In view of Lemma~\ref{lem:ugly1} (assuming its a.a.s.\ conclusion is true), if both $C$ and $C'$ are ugly, all but at most $2(4d^2)^d$ cells in $\cN$ must be sparse. 
Then, we obtained a set of at least $(4/3)\frac{d-i}{d} \eps^{-1} - 2(4d^2)^d \ge (5/4)\frac{d-i}{d} \eps^{-1}$ sparse cells within distance at most $(A+2)r_0$ from each other (i.e.~pairwise adjacent in the power graph ${\sG_\cC}^\ell$ for $\ell=\lceil A+2\rceil$).
This contradicts the a.a.s.~conclusion of Lemma~\ref{lem:fewsparse}. 
\end{proof}

\begin{lem}\label{lem:goodaroundugly}
A.a.s.\ the following holds. Given any ugly cell $C$, let $\cG'\subseteq\cG$ be the set of good cells at $\ell_\infty$-distance at most $3r_0$ from $C$. Then, the subgraph of $\sG_\cC$ induced by $\cG'$ is connected and has diameter (as a graph) at most $2(20d)^d$.
\end{lem}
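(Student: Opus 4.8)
The plan is to work deterministically on the a.a.s.\ event that the conclusions of Lemmas~\ref{lem:fewsparse}, \ref{lem:ugly1}, \ref{lem:badugly} and \ref{lem:ugly2} all hold, and then show that around any ugly cell the nearby good cells form a connected, small-diameter subgraph of $\sG_\cC$. Fix an ugly cell $C$ and let $\cG'$ be the set of good cells at $\ell_\infty$-distance at most $3r_0$ from $C$. First I would observe that, by Lemma~\ref{lem:ugly2} (applied with a suitable constant $A$, say $A=10d$), any ugly cell within $\ell_\infty$-distance, say, $4r_0$ of $C$ lies in the same component of $\sG_\cC[\cU]$ as $C$; hence by Lemma~\ref{lem:ugly1} all such ugly cells lie within $\ell_\infty$-distance $4d^2 s = o(r_0)$ of $C$. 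In other words, the cube $Q$ of $\ell_\infty$-radius $3r_0$ around $C$, minus a tiny ball of radius $O(s)$ around $C$, contains no ugly cells at all: every cell there is either good or bad, and bad cells are sparse.

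The core of the argument is a volume/connectivity count in the style of Lemmas~\ref{lem:badugly} and~\ref{lem:ugly2}. I would show that the graph $\sG_\cC[\cG']$ is connected by taking any two cells $C_1,C_2\in\cG'$ and exhibiting a short path between them inside $\cG'$. Consider the straight-line segment in $[0,1]^d$ between the centers of $C_1$ and $C_2$ and the tube of $\ell_p$-radius roughly $r_0/2$ around it: this tube has volume $\Omega(r_0^{d-1})$ times its length, so it meets $\Omega_\eps(\text{length}/r_0)$ cells, and more importantly it contains, in any sub-chunk of $\ell_\infty$-diameter $\Theta(r_0)$, at least $(1+\eps)/\eps$ cells — enough that, by Lemma~\ref{lem:fewsparse}(2a) applied to the appropriate power ${\sG_\cC}^\ell$ (with $\ell$ a constant depending only on $d$), not all of them can be sparse, so each such chunk contains a dense cell. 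Dense cells in this region are good (they are within distance $3r_0$ of $C$, far from any ugly component, so the dense cells near $C_1$, $C_2$ and along the tube all lie in one component of $\sG_\cC[\cD]$, which must be $\cG$ since $\cG$ is the largest component and by Lemma~\ref{lem:badugly} the ugly/bad cells are too few to host a component rivalling $\cG$). Chaining together dense cells found in consecutive overlapping chunks along the tube, each consecutive pair being within distance $r_0-2ds$ hence adjacent in $\sG_\cC$, and absorbing $C_1$ and $C_2$ at the ends, gives a path in $\sG_\cC[\cG']$ of length $O_\eps(1)$ (since the tube has length $O(r_0)$ and each step covers $\Omega_\eps(r_0)$). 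This simultaneously proves connectivity and the diameter bound; one then checks the constant works out to at most $2(20d)^d$ by being slightly careful about how many cells sit in a ball of radius $3r_0$.

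The main obstacle is making sure the dense cells we locate along the tube genuinely belong to $\cG$ (the global giant component of dense cells), rather than to some other component of $\sG_\cC[\cD]$. This is where Lemmas~\ref{lem:ugly1}, \ref{lem:ugly2} and~\ref{lem:badugly} do the real work: the region of radius $\sim 3r_0$ around $C$ contains no ugly cells except those in $C$'s own (tiny-diameter) ugly component, so every dense cell in that region is adjacent to a good cell — actually it suffices that the dense cells there are interconnected through non-ugly cells — and hence lies in the same dense component as all the others, which has to be $\cG$ by maximality (the total number of bad and ugly cells, $n^{1-\eps/2}+n^{O(\eps^{1/d})}$, is far smaller than $|\cG|$, which is $\Theta_\eps(n/\log n)$ up to lower-order terms). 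A secondary technicality is handling cells $C_1,C_2$ that lie near the boundary of $[0,1]^d$: there the tube may be clipped, but Lemma~\ref{lem:fewsparse}(2b)--(2c) gives the correspondingly reduced count of sparse cells needed near $i$ facets, so the same chunk-by-chunk argument still produces a dense (hence good) cell in each chunk. Once these two points are nailed down, the path construction and the diameter estimate are routine.
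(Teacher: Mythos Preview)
Your tube-and-chunk argument has a real quantitative gap. You want each ``sub-chunk of $\ell_\infty$-diameter $\Theta(r_0)$'' to contain at least $(1+\eps)/\eps$ cells so that Lemma~\ref{lem:fewsparse}(2a) forces a dense cell, \emph{and} you want consecutive dense cells found this way to be within $r_0-2ds$ of each other so that they are adjacent in $\sG_\cC$. These two requirements are incompatible. Since $s^d\sim 2^{1-d}\eps d\theta r_0^d$, a region needs volume at least $(1+\eps)2^{1-d}d\theta\, r_0^d$ to contain $(1+\eps)/\eps$ cells; already for $d=2$ and the Euclidean norm this exceeds $\pi r_0^2$, i.e.\ the chunk must have diameter well above $r_0$. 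Two dense cells located in consecutive (even heavily overlapping) chunks of that size can then be several multiples of $r_0$ apart, so your assertion that ``each consecutive pair [is] within distance $r_0-2ds$'' is unsupported and in general false. The chaining step therefore does not produce a path in $\sG_\cC$.

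The paper's proof gets around exactly this obstacle by exploiting a feature you did not use: the \emph{ugliness of $C$ itself}. Because $C$ is ugly, the argument of Lemma~\ref{lem:badugly} already supplies a set $\cN'$ of roughly $(2^{d-i-1}/d)\eps^{-1}$ sparse cells adjacent to $C$. The paper then tessellates by much smaller ``boxes'' of side $\sim r_0/(3d)$, small enough that cells in adjacent boxes are genuinely adjacent in $\sG_\cC$; each such box contains only about $\eps^{-1}/(\theta(2d)^{d+1})$ cells --- far fewer than $(1+\eps)/\eps$ --- but if a box were entirely sparse, then \emph{together with} $\cN'$ one exceeds the threshold in Lemma~\ref{lem:fewsparse}(2b). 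Thus every box in the annular region $R_1$ contains a good cell, and connectivity and the diameter bound for $\cG'$ follow by walking through adjacent boxes and a short topological argument separating $\cG_0,\cG_1,\cG_2$. Without feeding in the sparse cells coming from the ugliness of $C$, the box (or chunk) is simply too small to invoke Lemma~\ref{lem:fewsparse} on its own.

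A secondary issue: even if the chaining worked, your tube of $\ell_p$-radius $r_0/2$ around the segment from $C_1$ to $C_2$ can protrude beyond the $\ell_\infty$-ball of radius $3r_0$ around $C$ (the segment stays inside by convexity, but the tube adds up to $r_0/2$), so the good cells you find along it need not lie in $\cG'$; your path could leave $\cG'$ and you would only be proving connectivity of a strictly larger set. The paper handles the analogous containment issue carefully via the decomposition $\cG'=\cG_0\cup\cG_1\cup(\cG'\setminus(\cG_0\cup\cG_1))$.
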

\begin{proof}
Let $t'=r_0/(3d)$. Tesselate $[0,1]^d$ by $d$-dimensional cubes of side $t = \lceil(t')^{-1}\rceil^{-1}\sim r_0/(3d)$. We call these cubes {\em boxes} to distinguish them from the cells.
Note that a cell may intersect a box and not be fully contained in it. Regardless of that, each box contains at least
\begin{equation}
(t/s-2)^d \ge \eps^{-1}/ \big(\theta (2d)^{d+1}\big)
\label{eq:cellsinbox}
\end{equation}
cells. 
We say that two different boxes $B$ and $B'$ are adjacent if the set $B\cup B'$ is topologically connected (i.e.~they share at least one point). (Recall that the term adjacent has a different meaning for cells.)
If two different cells $C$ and $C'$  intersect the same box or two adjacent boxes, then $C$ and $C'$ must be at $\ell_p$-distance at most $2dt\sim 2r_0/3$, and thus must be adjacent in the graph of cells $\sG_\cC$.

Fix an ugly cell $C$. We will assume that all the a.a.s.\ properties leading to the conclusions of Lemmas~\ref{lem:fewsparse}, \ref{lem:ugly1}, \ref{lem:badugly} and~\ref{lem:ugly2} hold, and deterministically prove the statement for cell $C$.
Let $R_1\subseteq[0,1]^d$ be the union of all boxes at $\ell_\infty$-distance between $1.1r_0$ and $3r_0-t$ from cell $C$. Since the side of a box is $t< r_0/5$, then $R_1$ is topologically connected. Moreover, $[0,1]^d\setminus R_1$ has two connected components $R_0$ and $R_2$ with, say, $C\subseteq R_0$.
(It is worth noting that these properties of $R_1$ hold regardless of how close cell $C$ is from some facets of $[0,1]^d$, but this may cease to be true if we replaced $\ell_\infty$ by some other $\ell_p$ in the definition of $R_1$.)
Let $\cG_0$ and $\cG_2$ be the set of good cells contained in $R_0$ and $R_2$ (respectively), and let $\cG_1$ be the set of good cells that intersect $R_1$.
Every good cell in $\cG$ must belong to exactly one of the sets $\cG_0$, $\cG_1$ and $\cG_2$.
Observe that every cell  in $\cG_0$ is at $\ell_\infty$-distance at most $1.1r_0+t \le1.3r_0$ from $C$, and every cell  in $\cG_2$ is at $\ell_\infty$-distance at least $3r_0-t\ge 2.8r_0$ from $C$.
Therefore, if $C_0\in\cG_0$ and $C_2\in\cG_2$, then $C_0$ and $C_2$ must be at $\ell_\infty$-distance at least $1.5r_0-ds\ge r_0$. In particular, no cell in $\cG_0$ is adjacent to any cell in $\cG_2$ with respect to the graph of cells $\sG_\cC$.

\begin{claim}\label{claim:R1}
Every box $B\subseteq R_1$ contains a good cell.
\end{claim}

Since $R_1$ is a topologically connected union of boxes and pairs of cells contained in adjacent boxes are adjacent in the graph of cells, Claim~\ref{claim:R1} implies that $\cG_1$ must induce a connected subgraph of  $\sG_\cC$.
Recall that the graph induced by the set $\cG=\cG_0\cup\cG_1\cup\cG_2$  is also connected by the definition of good cells. Hence, since there are no edges between $\cG_0$ and $\cG_2$ in the graph of cells, we deduce that $\cG_0\cup\cG_1$ also induces a connected graph.
Let $\cG'$ be the set of good cells at $\ell_\infty$-distance at most $3r_0$ from cell $C$.
Observe that $\cG_0\cup\cG_1 \subseteq \cG'$. Moreover, and every cell $C'\in\cG' \setminus (\cG_0\cup\cG_1)$ must intersect a box $B'$ which is adjacent to a box $B\subseteq R_1$, so $C'$ is adjacent in the graph of cells to some cell in $\cG_0\cup\cG_1$.
Hence, $\cG'$ induces a connected graph as well.

We proceed to bound the diameter (as a graph) of $\sG_\cC[\cG']$. Consider any two cells $C',C''\in\cG'$ and a path of cells $C'=C_1,C_2,\ldots,C_j=C''$ in $\cG'$ of minimal length. If two cells in the path intersect the same box, then they are adjacent in the graph of cells, so they must be consecutive in the path by our minimal length assumption. Similarly, we cannot have more than two consecutive cells in the path intersecting one same box. Therefore, we deduce that the number $j$ of cells in the path is at most twice the number of boxes that may be potentially intersected by cells in $\cG'$, which is at most
\[
2((6r_0+s)/t+2)^d \le 2(20d)^d.
\]
This gives the desired upper bound on the diameter of the graph.

It only remains to prove Claim~\ref{claim:R1}. In order to do so, suppose our ugly cell $C$ is at distance at most $r_0$ from exactly $i$ facets of the cube $[0,1]^d$, for some $0\le i\le d$. By~\eqref{eq:Nprime} in the proof of Lemma~\ref{lem:badugly}, we can find a set $\cN'$ of at least $\left(2^{d-i-1}/d\right) \eps^{-1} (1 - 2\alpha\eps^{1/d})$ sparse cells within $\ell_p$-distance at most $r_0$ from $C$, where $\alpha=\alpha(d)$ is a positive constant.

Pick any box $B\subseteq R_1$. Every cell contained in $B$ is at $\ell_\infty$-distance (and thus at $\ell_p$-distance) at least $1.1r_0$ from $C$, so it cannot belong to $\cN'$. Suppose that all cells contained in $B$ are sparse. Then, by~\eqref{eq:cellsinbox}, we have at least
\[
\left(2^{d-i-1}/d\right) \eps^{-1} (1 - 2\alpha\eps^{1/d}) + \eps^{-1}/ (\theta (2d)^{d+1}) \ge \left(2^{d-i-1}/d\right) \eps^{-1} (1 +\eps) \ge \frac{d-i}{d} (1 +\eps) \eps^{-1}
\]
sparse cells at $\ell_\infty$-distance at most $3r_0$ from $C$. Thus, these sparse cells are within $\ell_p$-distance $d(6r_0+s)\le7d(r_0-2ds)$ from each other and at distance at most $d(3r_0+s)+r_0 \le (4d+1)r_0$ from $i$ facets of $[0,1]^d$. This contradicts the a.a.s.\ conclusion of Lemma~\ref{lem:fewsparse} (with parameters $A=(4d+1)$ and $\ell=7d$). Therefore, every box $B$ in $\cR$ must contain at least one dense cell, which must be also good in view of Lemmas~\ref{lem:ugly1} and~\ref{lem:ugly2}. This proves Claim~\ref{claim:R1}, and completes the proof of the lemma.
\end{proof}

We say that a collection $\fP$ of paths in a graph covers a vertex if the vertex belongs to some path in $\fP$.
The next lemma provides us with an appropriate collection of paths $\fP$ that covers the ugly vertices and will be a crucial ingredient in the construction of a rainbow Hamilton cycle.
\begin{lem}\label{lem:paths}
Let $\bX_\cU$ denote the set of vertices in $\bX$ that belong to ugly cells. Given any constant $A>0$, a.a.s.\ there is a collection $\fP$ of vertex-disjoint paths in $\sG(\bX; \widehat r_{\delta\ge2})$ such that:
\begin{enumerate}
\item
$\fP$ covers all vertices of $\bX_\cU$;
\item
$\fP$ covers at most two vertices inside of each non-ugly cell;
\item
every vertex in $\bX$ that is covered by $\fP$ is at graph-distance at most $2(20d)^d$ from some vertex in $\bX_\cU$ with respect to the graph $\sG(\bX; \widehat r_{\delta\ge2})$;
\item
for each path $P\in\fP$, there is a good cell $C_P$ such that the two endvertices of $P$ lie in cells that are adjacent (in the graph of cells) to $C_P$;
\item
every two different paths in $\fP$ are at $\ell_p$-distance at least $Ar_0$ from each other.
\end{enumerate}
\end{lem}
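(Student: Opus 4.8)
The plan is to construct $\fP$ one ugly component at a time and then glue.  By Lemmas~\ref{lem:ugly1} and~\ref{lem:ugly2} (the latter applied with a parameter $A'$ that is a large enough constant offset of $A$ --- it suffices to take $A'=A+5(20d)^d$), a.a.s.\ every connected component $\cK$ of $\sG_\cC[\cU]$ has $\ell_\infty$-diameter at most $4d^2s$, and distinct components are at $\ell_p$-distance at least $A'r_0$.  For each such $\cK$ I will produce a \emph{single} path $P_\cK$ in $\sG(\bX;\widehat r_{\delta\ge2})$ that covers $\bX_\cU\cap\cK$, uses at most two vertices of any cell other than those of $\cK$, has all of its vertices within $\sG(\bX;\widehat r_{\delta\ge2})$-graph-distance $2(20d)^d$ of $\bX_\cU\cap\cK$, stays within $\ell_p$-distance $2(20d)^d r_1$ of $\cK$, and has both endvertices in cells adjacent in $\sG_\cC$ to one common good cell.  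Then $\fP:=\{P_\cK:\cK\text{ a component of }\sG_\cC[\cU]\}$ works: properties~(1)--(4) are immediate from the corresponding properties of the $P_\cK$, and property~(5) (hence also vertex-disjointness of the whole collection) follows because, for $A'$ as above and since $r_1\sim r_0$, the paths $P_\cK$ for different $\cK$ are pairwise at $\ell_p$-distance at least $Ar_0$, there being exactly one path per component.

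Now fix $\cK$, with cells $C_1,\dots,C_k$.  The first observation is that, since $\cK$ has $\ell_\infty$-diameter at most $4d^2s$ and $s=\Theta(\eps^{1/d}r_0)$, any two cells of $\cK$ are at $\ell_p$-distance $O_d(\eps^{1/d})r_0<r_0-2ds$ once $\eps$ is small enough; hence $\sG_\cC[\cK]$ is a \emph{complete} graph.  Consequently the blow-up obtained from $\sG_\cC[\cK]$ by replacing each cell by the clique on its vertices and joining adjacent cells completely is itself a complete graph on the vertex set $\bX_\cU\cap\cK$, and (recall $r_0\le\widehat r_{\delta\ge2}$ a.a.s.\ by~\eqref{eq:r0rd2}) it is a subgraph of $\sG(\bX;r_0)\subseteq\sG(\bX;\widehat r_{\delta\ge2})$.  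In particular $\bX_\cU\cap\cK$, which has size at most $k\log n\le(4d^2+2)^d\log n$ by Lemmas~\ref{lem:ugly1} and~\ref{lem:maxdensity}, can be spanned by a single path with \emph{any} prescribed pair of distinct endvertices $u_1,u_2\in\bX_\cU\cap\cK$.  It remains to choose $u_1,u_2$ well and to prepend and append short ``escape routes'' carrying them out to a common good cell.

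Fix a cell $C\in\cK$; by Lemma~\ref{lem:goodaroundugly} the set $\cG'$ of good cells within $\ell_\infty$-distance $3r_0$ of $C$ is nonempty, induces a connected subgraph of $\sG_\cC$ of diameter at most $2(20d)^d$, and each of its cells, being dense, contains $\ge\eps^3\log n\ge 4$ vertices.  Note also that every cell adjacent in $\sG_\cC$ to a cell of $\cK$ is \emph{bad}: it cannot be ugly, since $\cK$ is a maximal ugly component, nor good, since no cell of $\cK$ is adjacent to a good cell; and each such bad cell is in turn adjacent to a good cell, which necessarily lies in $\cG'$.  I would then establish that a.a.s.\ there are two vertex-disjoint paths $R_1,R_2$ in $\sG(\bX;\widehat r_{\delta\ge2})$, each of length at most $2(20d)^d$ and internally disjoint from $\bX_\cU\cap\cK$, such that $R_i$ joins some vertex of $\cK$ to a vertex lying in a cell adjacent to one common good cell $C_{P_\cK}\in\cG'$, and such that $R_1$ and $R_2$ together occupy at most two vertices of any cell other than those of $\cK$.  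Taking $u_1,u_2$ to be the $\cK$-endvertices of $R_1,R_2$, spanning $\bX_\cU\cap\cK$ by a $u_1$--$u_2$ path $Q$ in the complete blow-up, and setting $P_\cK:=R_1\cup Q\cup R_2$ then produces the required path for $\cK$.

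The existence of the escape routes $R_1,R_2$ is the heart of the matter, and the step I expect to be the main obstacle.  The qualitative picture is benign --- $\cK$ is an object of bounded size, sitting inside the connected graph $\sG(\bX;\widehat r_{\delta\ge2})$, at bounded $\ell_p$-distance from the enormous region $\cG$ of good cells (indeed from good cells already in $\cG'$), with its immediate neighbourhood consisting of bad cells --- but turning it into honest, disjoint, low-congestion paths of bounded length requires care, because a bad cell may be empty (so it cannot always serve as a stepping stone), because $\widehat r_{\delta\ge2}$ exceeds $r_0$ only by a $1+o(1)$ factor while the nearest good cell might sit somewhat farther than $r_0$, and because $R_1$ and $R_2$ must avoid one another while respecting the $\le 2$-per-cell budget.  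This is exactly where one leans on the $2$-connectivity of $\sG(\bX;\widehat r_{\delta\ge2})$ (Penrose, \eqref{eq:Penrose}), which prevents the $o(n)$-size set $\bX_\cU\cap\cK$ from being separated from the rest of the graph by deleting one vertex, together with the diameter bound of Lemma~\ref{lem:goodaroundugly} and the counting of sparse, bad and ugly cells in Lemmas~\ref{lem:fewsparse}--\ref{lem:badugly}.  The remaining verifications --- that all the a.a.s.\ events above hold simultaneously, and that the quantitative bounds line up with the constants demanded in~(2)--(5) --- are routine given the $O(1)$-size local structure.
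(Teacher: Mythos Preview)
Your high-level plan --- one path per ugly component $\cK$, using that $\bX_\cU\cap\cK$ spans a clique, then escaping to good territory, and finally invoking Lemma~\ref{lem:ugly2} to keep the paths for different $\cK$ far apart --- matches the paper's strategy. The setup, the handling of the separation constant $A'$, and the observation that cells adjacent to $\cK$ but not in $\cK$ are bad and hence adjacent to $\cG'$ are all fine.

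The genuine gap is exactly where you flag it: you never construct the two disjoint escape routes $R_1,R_2$. Merely invoking $2$-connectivity and Lemma~\ref{lem:goodaroundugly} is not enough to produce two vertex-disjoint bounded-length paths that simultaneously (a) land in cells adjacent to a \emph{common} good cell, (b) use at most two vertices per non-ugly cell, and (c) pass only through cells containing vertices (bad cells may be empty). You are right that this is delicate, and as written you have only described a wish list, not an argument. You also do not treat the degenerate case $|\bX_\cU\cap\cK|=1$, where you cannot pick distinct $u_1,u_2$ inside $\cK$.

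The paper sidesteps the whole difficulty by \emph{not} building two escape routes. It uses $2$-connectivity only minimally: it picks two edges $uv$, $u'v'$ of $\sG(\bX;\widehat r_{\delta\ge2})$ with $u,u'\in\bX_\cK$ and $v,v'\notin\bX_\cK$ distinct (taking $u=u'$ if $|\bX_\cK|=1$), forms the path $vP_0v'$ where $P_0$ is a Hamilton $u$--$u'$ path through $\bX_\cK$, and then extends on \emph{one side only}. Letting $D,D'$ be the (bad or good) cells containing $v,v'$, it uses Lemma~\ref{lem:goodaroundugly} to find a path of cells $D',D_1,\ldots,D_j,D$ in $\sG_\cC$ with $D_1,\ldots,D_j\in\cG'\setminus\{D,D'\}$ and $j\le2(20d)^d$, picks one vertex $v_i$ in each $D_i$ (possible since good cells are dense), and outputs $P_\cK=vP_0v'v_1\cdots v_{j-1}$ with $C_{P_\cK}=D_j$. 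Both endpoints $v$ and $v_{j-1}$ sit in cells adjacent to $D_j$ by construction. Because the extension runs entirely through distinct good cells, the at-most-two-per-cell budget and the graph-distance bound come for free, and there is no need to coordinate two routes or to worry about empty bad cells. This one-sided ``loop back'' through $\cG'$ is the idea you are missing.
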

\begin{proof}
Given any component $\cK$ of the graph $\sG_\cC[\cU]$ induced by the ugly cells, let $\bX_\cK$ denote the set of all vertices in $\bX$ contained in cells of $\cK$.
Assume that the a.a.s.\ conclusions of~\eqref{eq:Penrose}, \eqref{eq:r0rd2} and Lemmas~\ref{lem:ugly1}, \ref{lem:ugly2} and~\ref{lem:goodaroundugly} hold. Then, for each component $\cK$ of $\sG_\cC[\cU]$, we will deterministically find a path $P=P_\cK$ in $\sG(\bX; \widehat r_{\delta\ge2})$ such that:
\begin{enumerate}
\renewcommand{\theenumi}{\arabic{enumi}$'$}
\item
$P$ covers all vertices in $\bX_\cK$;
\item
$P$ covers at most two vertices inside of each non-ugly cell;
\item
$P$ only covers vertices within  graph-distance $2(20d)^d$ from $\bX_\cK$ with respect to $\sG(\bX; \widehat r_{\delta\ge2})$;
\item the two endvertices of $P$ are contained in cells that are both adjacent (in $\sG_\cC$) to the same good cell $C_P$.
\end{enumerate}
Set $A'= A+8(20d)^d$.
In view of Lemma~\ref{lem:ugly2}, for any two components $\cK$ and $\cK'$ of $\sG_\cC[\cU]$, the corresponding sets of vertices $\bX_\cK$ and $\bX_{\cK'}$ must be at distance at least $A'r_0$ from each other. Therefore, the paths $P_\cK$ and  $P_{\cK'}$ must be at distance at least $A'r_0-4(20d)^d\, \widehat r_{\delta\ge2} \ge A r_0$ from each other (since $\widehat r_{\delta\ge2} \le 2r_0$, by~\eqref{eq:r0rd2}). Combining this and properties~(1$'$--4$'$) above, the collection $\fP$ of all such paths $P_\cK$ will trivially satisfy all the conditions of the lemma. It only remains to prove the existence of these paths.

Pick any connected component $\cK$ of $\sG_\cC[\cU]$. If $\bX_\cK$ is empty, there is nothing to be done. Otherwise, the vertices in $\bX_\cK$ induce a clique in $\sG(\bX; r_0) \subseteq \sG(\bX; \widehat r_{\delta\ge2})$ (by Lemma~\ref{lem:ugly1}). Moreover, since $\sG(\bX; \widehat r_{\delta\ge2})$ is $2$-connected (by~\eqref{eq:Penrose}), we can find four different vertices $u,u'\in\bX_\cK$ and $v,v' \in \bX \setminus \bX_\cK$ (unless $|\bX_\cK|=1$, in which case we set $u=u'$) such that $uv$ and $u'v'$ are edges of $\sG(\bX; \widehat r_{\delta\ge2})$. Therefore, we can connect all vertices in $\bX_\cK$ by a path $P_0$ with endvertices $u$ and $u'$, and extend this path to a longer path $vP_0v'$  in $\sG(\bX; \widehat r_{\delta\ge2})$. Note that all edges in that path have length at most $r_0$ except for possibly $uv$ and $u'v'$, which have length at most $\widehat r_{\delta\ge2}$.
Let $D$ and $D'$ be respectively the cells containing $v$ and $v'$ (possibly $D=D'$). Note that $D$ and $D'$ may be good or bad cells but not ugly since $v,v' \in \bX \setminus \bX_\cK$, so in particular they must be adjacent in the graph of cells to some good cell. If $D=D'$, then the path $P=vP_0v'$ already satisfies properties~(1$'$--4$'$), with $C_P$ being any good cell adjacent to $D=D'$.
Similarly, if $D$ and $D'$ are adjacent in $\sG_\cC$ and $D'$ is a good cell, then we pick any vertex $v''\ne v'$ in cell $D'$ (it must exist, since $D'$ is dense), and set $P=v''vP_0v'$. The obtained path also meets our requirements, with $C_P$ being any good cell adjacent to $D'$. The symmetric case can be dealt with analogously. Thus, we can restrict ourselves to the case in which $D\ne D'$ and moreover $D$ and $D'$ are both bad or non-adjacent.  
In particular, each one of them is adjacent to some good cell not in $\{D,D'\}$. Under these assumptions, we will extend $vP_0v'$ to a longer path with the desired properties, by adding at most $2(20d)^d-1$ extra edges of length at most $r_0$.

Fix any arbitrary ugly cell $C\in\cK$, and let $\cG'$ be the set of good cells at $\ell_\infty$-distance at most $3r_0$ from $C$.
By Lemma~\ref{lem:ugly1}, the union of all cells in $\cK$ has $\ell_\infty$-diameter at most $4d^2s$.
Therefore, $D$ and $D'$ must be at $\ell_\infty$-distance at most $\widehat r_{\delta\ge 2}+4d^2s \le 3r_0/2$ from cell $C$ (by~\eqref{eq:r0rd2} and our choice of $\epsilon$ sufficiently small).
From our assumptions, each of $D$ and $D'$ must be adjacent in $\sG_\cC$ to some good cell (not in $\{D,D'\}$) at $\ell_\infty$-distance at most $3r_0/2 + s + (r_0-2ds) \le 3r_0$ from $C$. That is, $D$ and $D'$ must be adjacent to some cell in $\cG' \setminus \{D,D'\}$. In view of Lemma~\ref{lem:goodaroundugly}, there exists a path of cells $D',D_1,D_2,\ldots,D_jD$ in the graph $\sG_\cC$ such that cells $D_1,D_2,\ldots,D_j$ belong to $\cG'\setminus \{D,D'\}$ and $1\le j\le 2(20d)^d$. Choose one vertex $v_i\in\bX$ inside each cell $D_i$ ($1\le i\le j$). Then, the path $P = vP_0v'v_1v_2\cdots v_{j-1}$ satisfies our desired properties (with cell $C_P=D_j$).
\end{proof}
Finally, we provide a (simpler) analogue of Lemma~\ref{lem:paths} that will be used for the construction of the rainbow perfect matching.
\begin{lem}\label{lem:paths2}
Let $\bX_\cU$ denote the set of vertices in $\bX$ that belong to ugly cells. Given any constant $A>0$, a.a.s.\ there is a collection $\fP$ of vertex-disjoint paths in $\sG(\bX; \widehat r_{\delta\ge1})$ such that:
\begin{enumerate}
\renewcommand{\theenumi}{\arabic{enumi}$''$}
\item
$\fP$ covers all vertices of $\bX_\cU$;
\item
every path in $\fP$ has an even number of vertices, and contains at most one vertex not in $\bX_\cU$;
\item
every two different paths in $\fP$ are at $\ell_p$-distance at least $Ar_0$ from each other.
\end{enumerate}
\end{lem}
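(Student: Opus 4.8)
The plan is to follow the proof of Lemma~\ref{lem:paths}, which becomes considerably simpler here: conditions~1$''$--3$''$ impose no control on graph-distances and do not require the endvertices of a path to sit near a good cell, so Lemma~\ref{lem:goodaroundugly} will not be needed at all. First I would fix $A'=A+4$ and condition on the a.a.s.\ events of \eqref{eq:Penrose}, \eqref{eq:r0rd2}, Lemma~\ref{lem:ugly1} and Lemma~\ref{lem:ugly2} (the last one applied with the constant $A'$), after which the argument is deterministic. From \eqref{eq:Penrose} the graph $\sG(\bX;\widehat r_{\delta\ge1})$ is connected; from \eqref{eq:r0rd2} we have $\widehat r_{\delta\ge1}\le 2r_0$; and from Lemma~\ref{lem:ugly2} any two vertices of $\bX$ lying in ugly cells of two distinct components of $\sG_\cC[\cU]$ are at $\ell_p$-distance at least $A'r_0$.

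For each connected component $\cK$ of $\sG_\cC[\cU]$ with $\bX_\cK\ne\emptyset$ I would construct a single path $P_\cK$ in $\sG(\bX;\widehat r_{\delta\ge1})$ and then take $\fP=\{P_\cK\}$ over all such $\cK$. Exactly as in the proof of Lemma~\ref{lem:paths}, Lemma~\ref{lem:ugly1} guarantees that $\bX_\cK$ induces a clique in $\sG(\bX;r_0)\subseteq\sG(\bX;\widehat r_{\delta\ge1})$. If $|\bX_\cK|$ is even, let $P_\cK$ be any Hamilton path of this clique: it has an even number of vertices, all of which lie in $\bX_\cU$. If $|\bX_\cK|$ is odd, I would use connectivity of $\sG(\bX;\widehat r_{\delta\ge1})$ to pick an edge $uv$ with $u\in\bX_\cK$ and $v\in\bX\setminus\bX_\cK$; since $\|X_u-X_v\|_p\le\widehat r_{\delta\ge1}\le 2r_0<A'r_0$, the vertex $v$ cannot lie in an ugly cell of any other component, hence $v$ lies in a non-ugly cell, i.e.\ $v\notin\bX_\cU$. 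Taking a Hamilton path $P_0$ of the clique on $\bX_\cK$ with endvertex $u$ and setting $P_\cK=vP_0$ gives a path with $|\bX_\cK|+1$ (even) vertices, covering $\bX_\cK$, with exactly one vertex ($v$) outside $\bX_\cU$; all edges of $P_\cK$ have length at most $r_0$ except possibly $uv$, whose length is at most $\widehat r_{\delta\ge1}$, so $P_\cK$ indeed lies in $\sG(\bX;\widehat r_{\delta\ge1})$. Since $\bX_\cU$ is the union of the sets $\bX_\cK$, this yields conditions~1$''$ and~2$''$.

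It then remains to check condition~3$''$, which also gives vertex-disjointness. Every vertex of $P_\cK$ lies within $\ell_p$-distance $\widehat r_{\delta\ge1}\le 2r_0$ of $\bX_\cK$: the clique vertices belong to $\bX_\cK$, and the possible extra vertex $v$ is within $\widehat r_{\delta\ge1}$ of $u\in\bX_\cK$. Hence, for $\cK\ne\cK'$, any $w\in P_\cK$ and $w'\in P_{\cK'}$ satisfy, by the triangle inequality and the $A'r_0$-separation of $\bX_\cK$ and $\bX_{\cK'}$, the bound $\|X_w-X_{w'}\|_p\ge A'r_0-2r_0-2r_0=Ar_0$; in particular the paths are vertex-disjoint and at $\ell_p$-distance at least $Ar_0$. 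This establishes all three conditions.

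I do not anticipate any serious obstacle: Lemma~\ref{lem:paths2} is the easy sibling of Lemma~\ref{lem:paths}. The only point requiring a little care is the odd case, where a single vertex must be borrowed from a non-ugly cell; the mild subtlety there is to confirm, via Lemma~\ref{lem:ugly2}, that this borrowed vertex is not itself ugly, and then to track the constants so that the $Ar_0$-separation of condition~3$''$ survives the one-vertex path extensions (which is why I take $A'=A+4$ when invoking Lemma~\ref{lem:ugly2}).
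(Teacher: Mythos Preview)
Your proposal is correct and follows essentially the same approach as the paper's proof (which is only a sketch): build one path per nonempty ugly component using the clique structure from Lemma~\ref{lem:ugly1}, and in the odd case borrow a single neighbour via $1$-connectivity. You simply fill in the details the sketch omits---verifying via Lemma~\ref{lem:ugly2} that the borrowed vertex lies in a non-ugly cell, and tracking constants (your $A'=A+4$) to secure the $Ar_0$-separation in condition~3$''$.
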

\begin{proof}[Proof (sketch).]
We proceed similarly as in the proof of Lemma~\ref{lem:paths}, so we just sketch the main differences. For each component $\cK$ of $\sG_\cC[\cU]$,
the vertices in $\bX_\cK$ induce a clique in $\sG(\bX; r_0) \subseteq \sG(\bX; \widehat r_{\delta\ge1})$ (by Lemma~\ref{lem:ugly1}).
If the number of vertices in $\bX_\cK$ is even, we simply connect them all by a path $P_\cK$. If it is odd, we use the fact that $\sG(\bX; \widehat r_{\delta\ge1})$ is a.a.s.\ $1$-connected (by~\eqref{eq:Penrose}) to find vertices $u\in\bX_\cK$ and $v\in \bX \setminus \bX_\cK$ at distance at most $\widehat r_{\delta\ge1}$ from each other (i.e.~$uv$ is an edge of $\sG(\bX; \widehat r_{\delta\ge1})$). Then we pick a path that connects all vertices in $\bX_\cK$ and has $u$ as an endpoint, and extend it to $P_\cK$ by adding edge $uv$.
\end{proof}

\section{Finding rainbow paths and cycles}\label{sec:localtasks}

In this section, we will build a rainbow spanning graph of $\sG(\bX; \bZ; \widehat r_{\delta\ge k})$, for $k\in\{1,2\}$,  consisting of small (path and cycle) components, which will be later used in the construction of a rainbow perfect matching ($k=1$) or a rainbow Hamilton cycle ($k=2$).
We will proceed in a greedy fashion, and build the rainbow small pieces in a specific order, since the calculations will only work if we reveal the colours of certain edges before others.

At this stage, we expose all the points of $\bX$ (which determine which cells are ugly, bad and good), and
assume henceforth that the a.a.s.\ conclusions of all statements in Section~\ref{sec:tess} hold. In particular, all probabilistic statements in the sequel will refer only to the random assignment $\bZ=(Z_{ij})_{1\le i<j\le n}$ of colours to edges.
In the case that our final goal is to build a rainbow Hamilton cycle in $\sG(\bX; \bZ; \widehat r_{\delta\ge 2})$, we pick a collection $\fP$ of vertex-disjoint paths satisfying properties~1--5 in Lemma~\ref{lem:paths} with $A=3$ (or $A=1000$ for that matter, as we only need to guarantee that paths in $\fP$ are far enough from each other).
Otherwise, in order to obtain a rainbow perfect matching in $\sG(\bX; \bZ; \widehat r_{\delta\ge 1})$, we pick a collection $\fP$ of vertex-disjoint paths
satisfying properties~1$'$--3$'$ in Lemma~\ref{lem:paths2}. Note that in the latter case, since $\widehat r_{\delta\ge 1} \le \widehat r_{\delta\ge 2}$, all the edges in the paths of $\fP$ also belong to $\sG(\bX; \widehat r_{\delta\ge2})$ and moreover $\fP$ satisfies conditions 1, 2, 3 and 5 (but not necessarily 4) in Lemma~\ref{lem:paths}.
These are in fact all the assumptions on $\fP$ that we will need in this section.

%
\begin{defn}
We will refer to the paths in $\fP$ as {\em ugly paths} (since they cover all the vertices in ugly cells). 
Let $\bX'$ be the set of all vertices in ugly paths, and  let $\bX'' = \bX \setminus \bX'$.
\end{defn}

Note that non-ugly (i.e.~good or bad) cells may contain both vertices of $\bX'$ and vertices of $\bX''$. However, by property~2 of Lemma~\ref{lem:paths}, each of these cells contains at most two vertices of $\bX'$.
In view of this and since bad cells are sparse and good cells are dense, we conclude that each bad cell contains at most $\eps^3\log n$ vertices of $\bX''$, and each good cell contains at least $\eps^3\log n-2$ vertices of $\bX''$.
We will first analyze the colours of the edges in the ugly paths, and then ``delete'' the corresponding vertices (i.e.~$\bX'$) from all bad and good cells in $\cC$.
A high-level description of our construction can be summarized in the following steps.
\begin{enumerate}
\item
First we reveal the colours of the edges in the paths of $\fP$ (i.e.~the ugly paths).
We will show that a.a.s.\ we do not get any repeated colours, so the ugly paths form a rainbow forest.
\item
Next we consider bad cells one by one. In each bad cell, we fix some arbitrary Hamilton path on the vertices of $\bX''$ contained in that cell (ignore those in $\bX'$), reveal the colours of all the edges on this path, and then greedily discard edges with previously used colours in the process (i.e.~in that same or previous bad cells or in ugly paths). We will see that a.a.s., only at most a few edges are removed from the path, leaving behind a small number of paths. These paths together with the ugly ones are all rainbow by construction. 
\item
Next we run through good cells one by one (we again restrict our attention to vertices of $\bX''$ in those cells and ignore those in $\bX'$). Within a particular good cell, we run through the vertices revealing the colours of the edges to other vertices in the same cell. We will construct a rainbow Hamilton cycle within each good cell, and a.a.s.\ these cycles will have no colour collisions with each other or any of the previously constructed parts. 

\item
Finally we hook up all the parts to build either a rainbow perfect matching or a rainbow Hamilton cycle.
This requires adding some extra edges that connect vertices in different cells (one of which is always good).
Fortunately, we have plenty of these edges available and many unused colours, so we can a.a.s.\ find the required edges with no colour collisions.
\end{enumerate}

In the remaining of the section, we will focus on steps 1--3, which produce a rainbow collection of vertex-disjoint paths and cycles covering all vertices in $\bX$.
The final step is described in Section~\ref{sec:connect}.

\subsection{Ugly paths}\label{ssec:ugly}

We will show that the number of edges in ugly paths is so small that we do not expect colour collisions among them, so in particular the collection of ugly paths is a rainbow forest.
Given a graph $G$, let $E(G)$ denote the edge set of $G$.
\begin{lem}\label{lem:ugly3}
Assume that $\bX$ satisfies all the a.a.s.\ statements in Section~\ref{sec:tess}, and pick a collection  $\fP$ of (ugly) paths satisfying conditions 1,2,3,5 of Lemma~\ref{lem:paths}.
Then a.a.s.\ $\bigcup_{P\in\fP} E(P)$ is rainbow with respect to the random edge colouring $\bZ$.
\end{lem}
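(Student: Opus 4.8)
The plan is to bound the total number of edges in $\bigcup_{P\in\fP}E(P)$ and then apply a first-moment argument to show that a.a.s.\ no colour is repeated among them. First I would count the ugly cells and vertices: by Lemma~\ref{lem:badugly} there are at most $n^{O(\eps^{1/d})}$ ugly cells, and by Lemma~\ref{lem:maxdensity} each cell contains at most $\log n$ vertices of $\bX$, so $|\bX_\cU| \le n^{O(\eps^{1/d})}\log n = n^{O(\eps^{1/d})}$. By the construction in Lemma~\ref{lem:paths} (property~3), every vertex covered by $\fP$ lies within graph-distance $2(20d)^d$ of $\bX_\cU$ in $\sG(\bX;\widehat r_{\delta\ge2})$; since $\widehat r_{\delta\ge2}\le r_1$ a.a.s.\ by~\eqref{eq:r0rd2}, Lemma~\ref{lem:DeltaRGG} (applied with $\ell=2(20d)^d$) gives that the power graph ${\sG(\bX;r_1)}^{\ell}$ has maximum degree $O(\log n)$, so the number of vertices within graph-distance $\ell$ of $\bX_\cU$ is at most $|\bX_\cU|\cdot O(\log n) = n^{O(\eps^{1/d})}$. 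Hence the total number of vertices covered by $\fP$, and therefore the number of edges $m := |\bigcup_{P\in\fP}E(P)|$, is at most $m \le n^{O(\eps^{1/d})}$.

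Next I would run the first-moment computation. The colour of each edge $ij$ is $Z_{ij}$, chosen uniformly and independently from the set of $\lceil Kn\rceil$ colours. Conditioning on $\bX$ (which determines $\fP$ deterministically, since all a.a.s.\ statements of Section~\ref{sec:tess} are assumed), the colours $Z_{ij}$ for the $m$ edges of $\fP$ are i.i.d.\ uniform. The probability that some two of these $m$ edges share a colour is, by a union bound, at most $\binom{m}{2}\big/\lceil Kn\rceil \le m^2/(Kn)$. Since $m \le n^{O(\eps^{1/d})}$ with the exponent $O(\eps^{1/d}) < 1/2$ for $\eps$ sufficiently small given $d$ (recall $\eps$ may be chosen as small as we like with respect to $d$), we get $m^2/(Kn) \le n^{2\cdot O(\eps^{1/d}) - 1} = o(1)$. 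Therefore a.a.s.\ all $m$ edges receive distinct colours, i.e.\ $\bigcup_{P\in\fP}E(P)$ is rainbow.

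The only real subtlety — and the step I expect to require the most care — is establishing the bound $m \le n^{O(\eps^{1/d})}$ with an exponent strictly below $1/2$, since the argument chains together three separate estimates (the count of ugly cells from Lemma~\ref{lem:badugly}, the neighbourhood-size bound from Lemma~\ref{lem:DeltaRGG}, and the per-cell vertex bound from Lemma~\ref{lem:maxdensity}) and one must check that the combined exponent is still $O(\eps^{1/d})$ and in particular does not pick up any factor that spoils the $<1/2$ requirement. The $\log n$ factors from Lemmas~\ref{lem:maxdensity} and~\ref{lem:DeltaRGG} are harmless since they are absorbed into $n^{o(1)}$, and the constant $\ell = 2(20d)^d$ depends only on $d$, so Lemma~\ref{lem:DeltaRGG} applies. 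Once $m \le n^{O(\eps^{1/d})}$ is in hand, the rest is the routine union bound above. I would also note explicitly, for use in later sections, that this same argument shows the stronger fact that the colours appearing on ugly paths form a set of size $m = n^{O(\eps^{1/d})} = o(n)$, leaving almost all of the $\lceil Kn\rceil$ colours unused.
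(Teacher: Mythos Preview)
Your proposal is correct and follows essentially the same approach as the paper's proof: bound the number of vertices (hence edges) in ugly paths via Lemmas~\ref{lem:badugly}, \ref{lem:maxdensity} and~\ref{lem:DeltaRGG} together with property~3, then apply a first-moment/union bound on colour collisions. The paper simply fixes the exponent as $0.4$ (absorbing the ``$\eps$ small enough'' choice), writing $|\bX'|=O(n^{0.4}\log n)$, and bounds the no-collision probability as $(1-|\bX'|/(Kn))^{|\bX'|}=1-o(1)$ rather than via $\binom{m}{2}/(Kn)$, but these are cosmetic differences.
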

\begin{proof}
Recall that $\bX_\cU$ is the set of vertices in ugly cells, and $\bX'$ is the set of vertices in ugly paths.
Let $\widehat\bX_\cU$ be the set of vertices in $\bX$ within graph-distance $2(20d)^d$ from some vertex in $\bX_\cU$ with respect to the graph $\sG(\bX; \widehat r_{\delta\ge2})$. By condition~3 of Lemma~\ref{lem:paths}, we have $\bX' \subseteq \widehat\bX_\cU$.
We wish to obtain a bound on $|\bX'|$.
Note that $|\bX_\cU| \le n^{0.4}$ by Lemmas~\ref{lem:maxdensity} and~\ref{lem:badugly} (assuming $\eps$ is sufficiently small).
Moreover, by Lemma~\ref{lem:DeltaRGG}, every vertex of $\bX_\cU$ has at most $O(\log n)$ vertices within graph-distance $\ell=2(20d)^d$ in $\sG(\bX; \widehat r_{\delta\ge2}) \subseteq \sG(\bX,r_1)$ (see also~\eqref{eq:r0rd2}). Therefore,
\begin{equation}\label{eq:sizeXprime}
|\bX'| \le |\widehat\bX_\cU| =O(n^{0.4} \log n),
\end{equation}
and the total number of edges in ugly paths is at most $|\bX'|-1$. Then, the probability that no colour is repeated across these edges is at least
\[
\left( 1- \frac { |\bX'| }{Kn} \right)^{|\bX'|} = \exp \left( O\left( \frac { (n^{0.4} \log n)^2}{Kn} \right) \right) = 1-o(1).
\]
So in particular, a.a.s.\ all the edges in ugly paths receive distinct colours. 
\end{proof}
Therefore, we can expose all the colours of the edges in the ugly paths and save them for future use. In other words, we are entitled to use any of these edges for our rainbow perfect matching or Hamilton cycle, but cannot use any of their colours anywhere else.

\subsection{Bad cells}\label{ssec:bad}

In this section, we restrict our atention to vertices in $\bX''$ contained in bad cells (and ignore $\bX'$). Recall that the set of vertices in any cell induces a clique in $\sG(\bX''; r_0)$, so we may use any possible edge between two vertices in a cell. Our goal is to show that a.a.s.\ every bad cell contains a rainbow spanning linear forest with at most $4/\eps$ many path components.
Further, the colours used across all of these forests are distinct and also separate from the colours used on the paths in $\fP$.

\begin{lem}\label{lem:bad}
Under the same assumptions as in Lemma~\ref{lem:ugly3}, a.a.s.\ for every bad cell $C\in\cB$ the complete graph on the vertices of $\bX''$ inside of $C$ contains a spanning forest $F_C$ consisting of at most $4/\eps$ many paths (possibly isolated vertices) such that $\bigcup_{C\in\cB}E(F_C) \cup \bigcup_{P\in \fP}E(P)$ is rainbow.
\end{lem}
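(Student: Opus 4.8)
The plan is to process the bad cells one by one (in some fixed but arbitrary order), and inside each bad cell $C$ build the forest $F_C$ greedily. Fix an arbitrary Hamilton path $H_C$ on the vertices of $\bX''$ inside $C$; such a path exists because these vertices form a clique in $\sG(\bX'';r_0)$. Since $C$ is bad it is sparse, so $H_C$ has at most $\eps^3\log n$ vertices, hence at most $\eps^3\log n$ edges. Reveal the colours of the edges of $H_C$ in the order they appear along the path. Say an edge is \emph{blocked} if its colour has already been used earlier — either on an edge of an ugly path in $\fP$ (those colours were exposed in Lemma~\ref{lem:ugly3}), or on a not-yet-blocked edge appearing earlier along $H_C$, or on a retained edge of some previously processed bad cell. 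Delete all blocked edges from $H_C$; the surviving edges form $F_C$, a spanning linear forest of $C$ (on $\bX''\cap C$) whose edge set, together with all previously retained edges and all of $\bigcup_{P\in\fP}E(P)$, is rainbow by construction. The number of path components of $F_C$ is at most one more than the number of deleted edges, so it suffices to show that a.a.s.\ at most $4/\eps - 1$ edges are deleted from each bad cell.

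The key estimate is a bound on the total number of ``used'' colours at any point in the process. By Lemma~\ref{lem:ugly3} and \eqref{eq:sizeXprime}, the ugly paths contribute $O(n^{0.4}\log n)$ colours. By Lemma~\ref{lem:fewsparse}(1) there are a.a.s.\ at most $n^{1-\eps/2}$ bad cells, and by Lemma~\ref{lem:maxdensity} (or Definition~\ref{def:densesparse}) each retains at most $\eps^3\log n$ edges; so all retained bad-cell edges together contribute at most $n^{1-\eps/2}\cdot\eps^3\log n = o(n)$ colours. Hence at every moment the set $U$ of forbidden colours has size $|U| = o(n) = o(Kn)$. Now, when we reveal the edges of $H_C$ for a fixed bad cell $C$, the probability that a given edge is blocked is at most $(|U| + \eps^3\log n)/(Kn) = o(1)$, where the extra $\eps^3\log n$ accounts for the at most $\eps^3\log n$ edges revealed earlier along $H_C$ itself. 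Crucially these edge-colours are chosen independently, so the number of blocked edges within $H_C$ is stochastically dominated by $\Bin(\lceil\eps^3\log n\rceil, o(1))$. A standard Chernoff / first-moment bound gives that the probability this exceeds $4/\eps - 1$ (a fixed constant) is $o(1/n)$; more concretely, the probability of having at least $4/\eps$ blocked edges is at most $\binom{\eps^3\log n}{\lceil 4/\eps\rceil}(o(1))^{\lceil 4/\eps\rceil} = O((\log n)^{4/\eps})\cdot o(1)^{4/\eps}$, which is $o(1/n)$ since the $o(1)$ factor beats any fixed power of $\log n$. A union bound over the at most $n^{1-\eps/2} = o(n)$ bad cells then shows that a.a.s.\ every bad cell deletes fewer than $4/\eps$ edges, so every $F_C$ has at most $4/\eps$ path components, as required.

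One point that needs a little care: the bound $|U| = o(n)$ must hold \emph{throughout} the process, including after all bad cells are processed, which is why we bound the retained edges over \emph{all} bad cells at once rather than cumulatively — this is legitimate because the total is deterministically $o(n)$ given the a.a.s.\ events of Section~\ref{sec:tess} (number of bad cells) and the trivial per-cell edge bound. A second subtlety is the within-$H_C$ self-collisions: when we reveal edge $e$ of $H_C$, some earlier edges of $H_C$ may themselves already be blocked and hence not actually ``retained'', so using the crude bound ``at most $\eps^3\log n$ earlier edges'' only over-counts the forbidden set, which is fine for an upper bound on the blocking probability. I expect the main obstacle to be purely bookkeeping — correctly tracking which colours are forbidden at each stage and making sure the independence of the $Z_{ij}$ is invoked in a way that legitimately yields the binomial domination — rather than any genuine difficulty; the numerical slack ($o(n)$ forbidden colours out of $Kn$, and a constant threshold $4/\eps$ against a $\Bin(\Theta(\log n), o(1))$) is very comfortable.
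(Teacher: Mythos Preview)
Your approach is essentially identical to the paper's: process bad cells greedily, fix a Hamilton path on $\bX''\cap C$, delete edges whose colour has already appeared, and bound the number of deletions via a union bound. The paper uses threshold $3/\eps$ deletions rather than your $4/\eps-1$, but that is immaterial.

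There is, however, a genuine quantitative gap in your final estimate. You correctly compute that the set $U$ of forbidden colours has size $O(n^{0.4}\log n)+n^{1-\eps/2}\cdot\eps^3\log n=O(n^{1-\eps/2}\log n)$, but then you discard this and work only with $|U|=o(n)$, i.e.\ blocking probability $p=o(1)$. From that alone your conclusion
\[
\binom{\eps^3\log n}{\lceil4/\eps\rceil}\,p^{4/\eps}=O\big((\log n)^{4/\eps}\big)\cdot o(1)^{4/\eps}=o(1/n)
\]
does \emph{not} follow: if, say, $p=1/\log\log n$ (which is certainly $o(1)$), the bound is nowhere near $o(1/n)$. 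The phrase ``the $o(1)$ factor beats any fixed power of $\log n$'' at best gives $o(1)$, not $o(1/n)$, and the union bound over $\Theta(n^{1-\eps/2})$ bad cells then fails.

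The fix is immediate and is exactly what the paper does: keep the explicit bound $p=O(n^{-\eps/2}\log n)$ on the blocking probability. Then
\[
\binom{\lceil\eps^3\log n\rceil}{\lceil4/\eps\rceil}\,p^{4/\eps}
\le (\log n)^{O(1/\eps)}\cdot n^{-(\eps/2)(4/\eps)}
= n^{-2+o(1)} = o(1/n),
\]
and the union bound goes through. So the idea is right; you just need to retain the polynomial saving in $n$ rather than collapsing it to a bare $o(1)$.
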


\begin{proof}
By Lemma \ref{lem:badugly}, we may assume that there are at most $n^{1-\eps/2}$ many bad cells.  We now proceed greedily, moving through all the bad cells one by one. In a given bad cell $C$, we ignore any vertices which belong to a path of $\fP$ (there are at most 2 such vertices), and connect the remaining vertices by any arbitrary Hamilton path $P_C$. Then, delete any edge from this path which receives a colour used already in $P_C$ or in a previous bad cell or in a path of $\fP$.
Call the resulting graph $F_C$, which is a spanning linear forest by construction and satisfies the required rainbow conditions. It only remains to prove that we did not delete too many edges from path $P_C$, so that $F_C$ has at most $4/\eps$ components.
Thus far, we have used at most $O(n^{1 - \eps/2}\log n)$ colours: at most $O(n^{0.4}\log n)$ on  paths of $\fP$ (by~\eqref{eq:sizeXprime}) and at most $\eps^3\log n \le \log n$ other colours in each of the previous bad cells.
So the probability that an edge is deleted from $P_C$ is $O( n^{1 - \eps/2}\log n / (Kn)) = O(\log n / n^{\eps/2})$. Since these events are independent, the probability of deleting more than $3/\eps$ edges in a cell is at most
$$
\binom{ \lceil\log n\rceil }{ \lceil 3/\eps \rceil } \cdot \left( O\left( \frac {\log n}{n^{\eps/2}} \right) \right)^{ 3/\eps } 
\le n^{-3/2+o(1)} = o(1/n).
$$
Hence, a.a.s.\ it will not happen for any cell in $\cB$ in the greedy process.
\end{proof}

As we already mentioned, we keep these paths and forbid their colours for future use.

\subsection{Good cells}\label{ssec:good}
In this section we will prove the following lemma which shows that each good cell contains a rainbow Hamilton cycle which does not use any previously used colours. As in Section~\ref{ssec:bad}, we restrict our attention to the vertices of $\bX''$ contained in each cell.
\begin{lem}\label{lem:good}
Under the same assumptions as in Lemma~\ref{lem:ugly3}, a.a.s.\ for every good cell $C\in\cG$ the complete graph on the vertices of $\bX''$ inside of $C$ contains a Hamilton cycle $H_C$ such that $\bigcup_{C\in \cG}E(H_C)\cup\bigcup_{C\in\cB}E(F_C) \cup \bigcup_{P\in \fP}E(P)$ is rainbow (where $\bigcup_{C\in\cB}E(F_C)$ is the linear forest obtained in Lemma~\ref{lem:bad}).
\end{lem}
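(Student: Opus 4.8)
The plan is to process the good cells one at a time in a fixed order, and for each good cell $C$ to reveal the colours of the edges lying inside $C$ and extract $H_C$ from the resulting randomly edge‑coloured clique. Before processing $C$, let $U$ be the set of colours already used (on the ugly paths of $\fP$, on the bad‑cell forests $F_{C'}$, and on the Hamilton cycles $H_{C'}$ of the previously processed good cells). Since the cycles of distinct good cells span disjoint subsets of $\bX''$, the total number of edges on all of them is at most $|\bX''|\le n$; together with the $O(n^{0.4}\log n)$ edges on ugly paths (by~\eqref{eq:sizeXprime}) and the at most $n^{1-\eps/2}\log n$ edges on bad‑cell forests (by Lemmas~\ref{lem:badugly} and~\ref{lem:maxdensity}) this gives $|U|\le(1+o(1))n$ throughout. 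All edges of $C$ joining two vertices of $\bX''$ are still unrevealed at this point (they lie in no previously processed cell and in no path of $\fP$, since $\fP$ meets $C$ only in $\bX'$), so conditionally on everything revealed so far their colours are i.i.d.\ uniform on the $\lceil Kn\rceil$ colours. Write $W=\bX''\cap C$ and $m=|W|$; because $C$ is dense, $\eps^3\log n-2\le m\le\log n$ by Definition~\ref{def:densesparse} and Lemma~\ref{lem:maxdensity}.

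Call an edge inside $C$ \emph{blocked} if its colour lies in $U$ (this happens independently for each edge with probability at most $|U|/\lceil Kn\rceil\le 2/K$ for $n$ large), and \emph{repeated} if its colour coincides with that of some other edge inside $C$. I would establish two estimates, each failing with probability only $o(\log n/n)$: (A) no vertex of $W$ is incident to at least $m/4$ blocked edges; (B) fewer than $m/10$ edges inside $C$ are repeated. For (A), the number of blocked edges at a fixed vertex is $\Bin(m-1,p)$ with $p\le 2/K$, so a standard tail bound plus a union bound over the $m$ vertices gives failure probability at most $m(8e/K)^{m/4}\le m\,n^{-(\eps^3/4)\log(K/8e)}$, which is $o(\log n/n)$ once $K$ is chosen large enough in terms of $\eps$. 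For (B), the expected number of families of $q$ edge‑disjoint monochromatic pairs of edges inside $C$ is at most $(m^4/(8\lceil Kn\rceil))^q\le(\log^4 n/n)^q$; and since the monochromatic graph on the edges of $C$ is a disjoint union of cliques, having $m/10$ repeated edges forces at least $m/30$ edge‑disjoint monochromatic pairs, so taking $q=\lceil m/30\rceil=\Omega(\log n)$ makes this $o(\log n/n)$.

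On the event that (A) and (B) both hold, delete from the complete graph on $W$ all blocked edges and all repeated edges. The resulting graph has minimum degree at least $(m-1)-m/4-m/10>m/2$, so by Dirac's theorem it contains a Hamilton cycle $H_C$. Every edge of $H_C$ is unblocked, hence has a colour outside $U$, and no two edges of $H_C$ share a colour (indeed their colours are distinct within the whole clique on $W$); so adjoining $E(H_C)$ to the rainbow family built so far keeps it rainbow. We then add the colours of $H_C$ to $U$ and proceed to the next good cell. A union bound over the $O_\eps(n/\log n)$ good cells, each failing with probability $o(\log n/n)$, and the fact (from Lemmas~\ref{lem:ugly3} and~\ref{lem:bad}) that $\bigcup_{P\in\fP}E(P)\cup\bigcup_{C\in\cB}E(F_C)$ is already rainbow, finish the proof by induction on the processed good cells.

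The main obstacle is driving the per‑cell failure probability below $o(\log n/n)$. Since there are $\Theta_\eps(n/\log n)$ good cells, the naive bound ``all colours inside $C$ are distinct'' (which fails with probability $\Theta(m^4/n)=\Theta(\log^4 n/n)$ per cell) is not strong enough, and one really must allow blocked and repeated edges and delete them. This is exactly the point at which the paper's strengthened notion of dense cell — requiring $\eps^3\log n$ points rather than a constant number — is used: it forces $m=\Omega(\log n)$, which turns the binomial tail in (A) and the moment bound in (B) into quantities that decay with a savings exponent proportional to $\eps^3\log n$, and it is here that $K$ must be taken sufficiently large relative to $\eps$.
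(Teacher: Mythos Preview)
Your argument is correct and follows essentially the paper's approach: process good cells sequentially, delete edges whose colours are already used or repeated within the cell, verify that the surviving graph satisfies Dirac's condition, and extract $H_C$. The only minor difference is that the paper reveals the edges of a cell one at a time (so a single chain-rule bound of $((1+o(1))/K)^{|S|}$ simultaneously handles both blocked and repeated colours), whereas you treat the two failure modes separately and control the repeated edges via a first-moment count on edge-disjoint monochromatic pairs; both variants yield an $o(1/n)$ per-vertex failure and finish by the same union bound.
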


\begin{proof}
We visit all good cells one at a time. Inside of each good cell, we build a rainbow Hamilton cycle restricted to vertices of $\bX''$ only (and where each pair of vertices is regarded as a potential edge). Good cells are dense, so each cell contains at least $\eps^3\log n -2$ vertices of $\bX''$ (by condition~2 of Lemma~\ref{lem:paths}).
More specifically, we do the following within each good cell:
we examine each edge one at a time, revealing its colour. We keep an edge if
its colour: (i) has not been used on $\bigcup_{C\in\cB}E(F_C)$ or $\bigcup_{P\in\fP}E(P)$, (ii)  has not been used in a rainbow Hamilton cycle from a previous good cell, and (iii) has not been seen previously in this cell.
Otherwise, we delete the edge.
At each step, the number of unusable colours for an edge is at most $n+o(n)$ (at most $n$ colours are referred to in (ii) and we may add $o(n)$ to account for (i) and (iii)).  
Hence, an edge is not present within a good cell with probability at most $(1+o(1))/K$. Suppose a good cell has $x \ge \eps^3 \log n  -2$ vertices of $\bX''$ in it. The probability that, when we reveal the edges incident to some fixed vertex, we see that $\lceil x /2\rceil$ of them are unusable is at most 
\[
\binom{x}{\lceil x /2\rceil} \bfrac{1+o(1)}{K}^{x /2} \le 2^x \bfrac{1+o(1)}{K}^{x /2} \le (5/K)^{(\eps^3/2) \log n}  = o (1/n)
\] 
so long as we choose our constants such that $(\eps^3/2) \log (K/5) > 1$. (Note that earlier arguments required $\eps$ to be sufficiently small, but $K$ can be chosen large enough with respect to this $\eps$.)
Thus, by the union bound over all vertices of $\bX''$ in dense cells, we conclude that a.a.s.\ each good cell contains a rainbow Dirac graph (a graph on $s$ vertices  with minimum degree at least $s/2$). Such graphs are Hamiltonian~\cite{Dir52} and so a.a.s.\ each good cell contains a rainbow Hamilton cycle.  Moreover, by construction, any colour used in such a cycle is not used in any other such cycle, nor in any path or forest constructed in ugly and bad cells. 
\end{proof}

\section{Connecting the good, the bad and the ugly}\label{sec:connect}
In this section, we complete the proof of Theorem~\ref{thm:main}. We will first show that a.a.s.\ $\sG(\bX;\bZ;\widehat r_{\delta\ge2})$ contains a rainbow Hamilton cycle, and then adapt the argument for the corresponding statement about a rainbow perfect matching.

We assume hereafter that $\bX$ is fixed and satisfies all the a.a.s.\ statements in Section~\ref{sec:tess}. Moreover, we pick a collection $\fP$ of ugly paths that meets all the requirements in Lemma~\ref{lem:paths} (with $A=3$), and assume that the a.a.s.\ conclusions of Lemmas~\ref{lem:ugly3}, \ref{lem:bad} and~\ref{lem:good} hold. This implies that we have a rainbow graph with edge set $\bigcup_{C\in \cG}E(H_C)\cup\bigcup_{C\in\cB}E(F_C) \cup \bigcup_{P\in \fP}E(P)$ that covers all vertices of $\bX$ and is made of path and cycle components. Furthermore, all edges in the ugly paths are of length at most $\widehat r_{\delta\ge2}$, whilst the remaining ones are of length at most $ds\le r_0$ (since they join pairs of vertices inside the same cell), so in particular the above graph is a rainbow subgraph of $\sG(\bX;\bZ;\widehat r_{\delta\ge2})$. We will obtain a rainbow Hamilton cycle by adding a few extra edges (and deleting some others accordingly) that join together the cycle and path components and preserve the rainbow condition. These new edges will be chosen so that their endpoints lie in different but adjacent cells in the graph of cells $\sG_\cC$ (so they have length at most $r_0$), and at least one of their endpoints is in $\bX''$ (i.e.~not in an ugly path). In particular, the colours of these new edges have never been revealed during the greedy process that lead us to Lemmas~\ref{lem:ugly3}, \ref{lem:bad} and~\ref{lem:good}, and thus remain random.

Pick a spanning tree $T$ of $\sG_\cC[\cG]$ (the large component induced by the good cells in the graph of cells).
Then its maximum degree satisfies $\Delta(T) \le \Delta(\sG_\cC) = O(1/\eps)$. 
Now by definition, each bad cell is adjacent (in $\sG_\cC$) to some cell in $\cG$.
Thus we may define a tree $T'$ on vertex set $\cG\cup\cB$ by connecting each bad cell to one of the good cells it is adjacent to in $\sG_\cC$. Then the cells of $\cB$ appear as leaves in $T'$.
Again we have the bound $\Delta(T') = O(1/\eps)$ since $T'$ is a subgraph of $\sG_\cC$. %
Finally, consider the collection of ugly paths $\fP$ we picked from Lemma \ref{lem:paths}. Recall that each path $P\in \fP$ has a corresponding good cell $C_P$. We define tree $T''$ by adding each $P\in\fP$ as new a leaf of $T'$ attached to good cell $C_P$.
Since paths in $\fP$ are far apart (by property~5 of Lemma~\ref{lem:paths}), no good cell has more than one such pendant edge attached and so we again have $\Delta(T'') = O(1/\eps)$.

We now use $T''$ as a template to create the rainbow Hamilton cycle.  For any ugly path $P$, we need to find an edge in the cycle $H_{C_P}$ of the good cell $C_P$ adjacent to $P$ (in $T''$), remove that edge and attach path $P$ to the endpoints of that edge. This extends $H_{C_P}$ to a larger cycle that covers $P$.  
Similarly, for each bad cell $C\in\cB$, we will attach each of the paths of $F_C$ to the cycle $H_{C'}$ of the good cell $C'$ that is adjacent to $C$ in $T''$.
We refer to these paths as {\em bad paths}.
Note that in this operation each bad path of $F_C$ is attached to the endpoints of a different edge of $H_{C'}$, which is immediately deleted.
This uses at most $4/\eps$ edges of $H_{C'}$ (by Lemma~\ref{lem:bad}).
Finally, we will attach the cycles within adjacent (in $T''$) good cells to each other. This operation requires deleting one edge in each cycle and joining the endpoints of the resulting paths together so that we obtain one longer cycle.
Iterating this procedure for all edges of $T''$ produces a Hamilton cycle in $\sG(\bX;\widehat r_{\delta\ge2})$ provided that we have enough edges in the cycles of the good cells.

It remains to show that we can a.a.s.\ connect all the pieces together in the manner described in the above paragraph and at the same time satisfy the rainbow condition. The potentially most delicate steps in our construction are when we hook up ugly/bad paths to cycles inside good cells. Consider such an ugly/bad path
$P$ with endpoints $u,v$ contained in cell $C$, and let $C'$ be the good cell adjacent to $C$ in $T''$.
We will show that we can find some edge $u'v'$ in the cycle $H_{C'}$ of the good cell such that the new edges $uu'$ and $vv'$ connecting path $P$ to cycle $H_{C'}$ are assigned previously unused colours. We call $u'v'$ the {\em hook edge} for the ugly/bad path $P$. Then, the hook edge $u'v'$ can be removed and replaced by the ugly/bad path together with the new edges $uu'$ and $vv'$.
Recall that, since cell $C'$ is good, it contains $x \ge \eps^3 \log n-2$ vertices of $\bX''$, and therefore cycle $H_{C'}$ has $x$ potential hook edges.
To ensure independence, we only consider every second edge in the cycle. Moreover, at most $O(1/\eps^2)$ edges are perhaps already used as hook edges to attach some other ugly/bad paths to cycle $H_{C'}$, (since the degree of $T''$ is $O(1/\eps)$ and each bad cell contains at most $4/\eps$ bad paths). In any case, there are at least $\eps^3 \log n / 2 -O(1/\eps^2) \ge \eps^3 \log n / 3$ edges in the cycle we can still use. The probability that no such edge has the property we seek (i.e.\ that the edges connecting the endpoints of the edge to the endpoints of the ugly/bad path are not both of unused colours) is at most
\[
\left( 1 - \left( \frac {K-1+o(1)}{K} \right)^2 \right)^{\eps^3 \log n / 3} \le \left( \frac {2}{K} \right)^{\eps^3 \log n / 3} = \exp \left( - \frac {\eps^3 \log(K/2)}{3} \log n \right) =o \bfrac{1}{n} 
\]
so long as we ensure that $K$ is large enough so that $\log(K/2) \eps^3  / 3 >1$.
Recall that we have $o(n)$ ugly/bad paths in total. Hence, a union bound over all the ugly/bad paths implies that we will a.a.s\  succeed at finding a hook edge for each such path. The argument to merge cycles $H_C$ and $H_{C'}$ of two good cells $C$ and $C'$ together is similar. This time we need to find two hook edges $uv$ in $H_C$ and $u'v'$ in $H_{C'}$ such that the new edges $uu'$ and $vv'$ receive previously unused colours. We have $\Omega_\eps(\log^2 n)$ choices of pairs of hook edges and so the failure probability is at most $\of{\frac{2}{K}}^{\Omega_\eps(\log^2 n)} =o\bfrac{1}{n}$.
This completes the proof of the first part of Theorem~\ref{thm:main}.

Finally, we will show that a.a.s.\ $\sG(\bX; \widehat r_{\delta\ge1})$ contains a perfect matching (for even $n$).
The argument will reuse most of the ideas in the construction of a rainbow Hamilton cycle earlier in this section, so we will only sketch the main differences.
This time, we will assume that $\fP$ satisfies conditions (1$''$--3$''$) from Lemma~\ref{lem:paths2}, and deterministically build a perfect matching.
Recall that our new assumptions on $\fP$ also imply that conditions 1,2,3,5 (but not necessarily 4) in Lemma~\ref{lem:paths} are true, so we are entitled to assume all the claims in Section~\ref{sec:localtasks}. Additionally, the ugly paths in $\fP$ have an even number of vertices and (by construction) only use edges of length at most $\widehat r_{\delta\ge 1}$.

We proceed as before but using $T'$ instead of $T''$ as a template and thus ignoring ugly paths. That is, we hook up each bad path to the cycles in the corresponding good cell and also the cycles within any two good cells that are adjacent in $T'$. However, we do not attach ugly paths to anything (in fact, we may not be able to do so, since $\fP$ may not satisfy property~4 in Lemma~\ref{lem:paths}). This procedure a.a.s.\ creates a big rainbow cycle $H$ in $\sG(\bX; \bZ; r_0) \subseteq \sG(\bX; \bZ; \widehat r_{\delta\ge1})$ that covers all vertices except for those in ugly paths, and moreover $H\cup\bigcup_{P\in\fP}P$ is rainbow.
If we restrict asymptotics to even $n$, then cycle $H$ has even length since all paths of $\fP$ have an even number of vertices. 
By removing alternating edges adequately from $H$ and the ugly paths, we obtain a rainbow perfect matching in $\sG(\bX; \widehat r_{\delta\ge1})$ as desired.
This implies the second statement of Theorem~\ref{thm:main}, and finishes the proof.

\section{Case $\mbf{p=1}$}\label{sec:p1}

In this section, we consider the case $p=1$, and sketch how to adapt the argument of Theorem~\ref{thm:main} in order to obtain~\eqref{eq:p1}. Recall that for $p=1$, it is not known whether or not~\eqref{eq:Penrose} holds (this is due to some technical parts of the argument in~\cite{Pen99} that break down for $p=1$). However, we can still claim that, for $p=1$ and any constant $\eta>0$,
\[
(1-\eta)r_0 \le \widehat r_{\1conn} \le \widehat r_{\2conn} \le (1+\eta)r_1 \qquad \text{a.a.s.}
\]
This follows from the fact that~\eqref{eq:Penrose} and~\eqref{eq:r0rd2} are valid for every $p>1$ and by continuity of $\theta=\theta(p)$ at $p\ge 1$.
Then we can pick $\eta$ sufficiently small, and replace $r_0$, $\widehat r_{\delta\ge1}$, $\widehat r_{\delta\ge2}$ and $r_1$ by $(1-\eta)r_0$, $\widehat r_{\1conn}$, $\widehat r_{\2conn}$ and $(1+\eta)r_1$, respectively, in the proof of Theorem~\ref{thm:main}. The argument is still valid with virtually no adaptation, and yields~\eqref{eq:p1}.

\section{Open questions}\label{sec:future}


In this paper, we showed that a.a.s.\ 
the first edge in the edge-coloured random geometric graph process $\big(\sG(\bX;\bZ;r)\big)_{r\ge0}$ that gives minimum degree at least $2$ (or $1$) and such that at least $n$ (or $n/2$) colours have appeared also creates a
rainbow Hamilton cycle (or perfect matching), provided that the number of colours is at least $c=\lceil Kn\rceil$, where $K=K(d)>0$ is a sufficiently large constant. This condition on $c$ ensures that at least $n$ (or $n/2$) colours have appeared long before the minimum degree becomes $2$ (or $1$).

Thus the most intriguing  open question is to prove that these statements hold for any number of colours, $c$. Of course for Hamilton cycles, we must have $c\ge n$ and for perfect matchings, we must have $c\ge n/2$. 
This problem may be particularly interesting in the case of perfect matchings when $d=2$ and $c=n/2$. In this case, the first appearance of a perfect matching and of $n/2$ distinct colours occurs once $(1+o(1)) \frac{n}{2} \log n$ many edges have arrived. The case of Hamilton cycles when $d=4$ and $c=n$ is interesting for the analogous reason. 
The first results on packing rainbow Hamilton cycles (that is, finding a collection of edge disjoint Hamilton cycles) in $G_c(n,p)$ were recently obtained in~\cite{FKMS14}.
We believe that using some of the ideas in~\cite{MPW11}, it should be relatively easy to extend our argument to find a constant number of edge-disjoint rainbow Hamilton cycles and perfect matchings in $\sG(\bX;\bZ;r)$ as well.
It would be interesting to consider further extensions in which the number of rainbow Hamilton cycles or perfect matchings in the packing grows to infinity as a function of $n$.

\bibliographystyle{abbrv}
\bibliography{rgg-rainbow}

\end{document}